\documentclass[11pt,leqno]{amsart}

\usepackage[T1]{fontenc}
\usepackage{lmodern}
\usepackage{amsmath,amssymb,amsthm,mathtools}
\usepackage[hidelinks]{hyperref}
\usepackage{microtype}

\newtheorem{theorem}{Theorem}[section]
\newtheorem{proposition}[theorem]{Proposition}
\newtheorem{lemma}[theorem]{Lemma}
\newtheorem{corollary}[theorem]{Corollary}
\newtheorem{remark}[theorem]{Remark}

\newcommand{\F}{\mathbb F}
\newcommand{\Tr}{\operatorname{Tr}}
\newcommand{\Aut}{\operatorname{Aut}}
\newcommand{\Gal}{\operatorname{Gal}}
\newcommand{\Jac}{\operatorname{Jac}}
\newcommand{\NP}{\operatorname{NP}}
\newcommand{\cR}{\mathcal R}
\newcommand{\cY}{\mathcal Y}
\newcommand{\cZ}{\mathcal Z}
\newcommand{\wpmap}{\wp}

\title[On the generalized Ree curve]
{On the generalized Ree curve}

\author{Ahmad Kazemifard}
\address{Department of Mathematics, Faculty of Mathematical Sciences and Computer,
Shahid Chamran University of Ahvaz, Ahvaz, Iran}
\email{a.kazemifard@scu.ac.ir}

\author{Saeed Tafazolian}
\address{IMECC/UNICAMP, R. S\'ergio Buarque de Holanda, 651,
Cidade Universit\'aria ``Zeferino Vaz'', 13083-859 Campinas, SP, Brazil}
\email{tafazolian@ime.unicamp.br}

\subjclass[2020]{Primary 11G20; Secondary 14G15, 14H37, 14H40, 14G50}
\keywords{generalized Ree curve, Ree curve, ray class field, Artin--Schreier curve, Jacobian, Newton polygon, Castle curve, automorphism group}

\begin{document}

\begin{abstract}
Let $p$ be an odd prime, let $q_0:=p^s$ with $s\ge1$, and put
$q:=pq_0^2$.  We study the smooth projective curve $\cR_{p,s}$ with
function field
\[
 \F_q(x,y,z),\qquad
 y^q-y=x^{q_0}(x^q-x),\qquad
 z^q-z=x^{2q_0}(x^q-x).
\]
The two-equation layer is already present in the ray-class and
big-action literature; for $p=3$ it gives the classical Ree curve.
In analogy with the generalized Suzuki curve, we call $\cR_{p,s}$
the \emph{generalized Ree curve} associated with $p$ and $s$.

For $p>3$ our principal geometric result determines the full geometric
automorphism group:
\[
 \Aut_{\overline{\F}_q}(\cR_{p,s})
 =\Aut_{\F_q}(\cR_{p,s})
 \cong U\rtimes\F_q^\times,
 \qquad |U|=q^3.
\]
The natural $p$-group $U$ gives a big action; we determine the complete
ramification filtration of the elementary abelian cover
$\cR_{p,s}\to\mathbb P^1_x$ and prove that
$(\cR_{p,s},P_\infty)$ is Castle.  In characteristic $3$ the same
subgroup is only the stabilizer of $P_\infty$ in the full Ree group,
so the automorphism structure exhibits a sharp characteristic-three
dichotomy.

On the arithmetic side, writing
\[
 \cY_{p,s}: y^q-y=x^{q_0}(x^q-x),\qquad
 \cZ_{p,s}: z^q-z=x^{2q_0}(x^q-x),
\]
we prove the $\F_q$-isogeny
\[
 \Jac(\cR_{p,s})\sim
 \Jac(\cY_{p,s})\times\Jac(\cZ_{p,s})^q.
\]
The first factor is supersingular and all three curves have $p$-rank
zero.  For $p>3$ the first Newton slope of $\cZ_{p,s}$ satisfies
\[
 \frac1{pq_0+2}\le \lambda_{\min}(\cZ_{p,s})\le\frac13,
\]
so $\cZ_{p,s}$ and $\cR_{p,s}$ are not supersingular.  For fixed
$p>3$ and $s\to\infty$, the order of the full automorphism group is
asymptotic to $2^{8/5}p^{-4/5}g^{8/5}$.
\end{abstract}

\maketitle

\section{Introduction}

Let $p$ be an odd prime and let
\[
 q_0:=p^s,\qquad q:=pq_0^2=p^{2s+1},\qquad s\ge1.
\]
We consider the smooth projective model $\cR_{p,s}$ of
\begin{equation}\label{eq:family}
 \begin{cases}
 y^q-y=x^{q_0}(x^q-x),\\
 z^q-z=x^{2q_0}(x^q-x).
 \end{cases}
\end{equation}
For $p=3$ these are the standard equations of the Ree curve.  The
classical Ree curve is arithmetically exceptional in two related senses.
If $g_{\rm Ree}$ denotes its genus and
\[
 N_q(g):=\max\{\#C(\F_q): C/\F_q\text{ a smooth projective curve of genus }g\},
\]
then
\[
 \#\cR_{3,s}(\F_q)=q^3+1=N_q(g_{\rm Ree}),
\]
so it is optimal over $\F_q$ in the standard sense; moreover it is
$\F_{q^6}$-maximal.  See Fuhrmann--Torres \cite{FT} for the terminology
and \cite[Introduction and Proposition~3.1]{TT} for the Ree curve.
In particular, it is supersingular.  We recall the maximal/minimal
terminology and this implication in Section~\ref{sec:prelim}.

A principal motivation for the present work is the odd-characteristic
extension of the Suzuki picture developed by Borges and Coutinho
\cite{BC}.  Starting from the Suzuki-type Artin--Schreier equation, they
determine rational point counts, the $L$-polynomial and the full
automorphism group of the generalized Suzuki curve.  One of the two
quotients of \eqref{eq:family}, denoted below by $\cY_{p,s}$, is in fact
that generalized Suzuki function field after the parameter shift
$t=s+1$; we record the explicit identification in
Remark~\ref{rem:Y=GS}.  Our question concerns the \emph{two-equation}
Ree layer and, especially, its full automorphism group.

The defining family itself is not new.  It occurs among Auer's ray-class
fields \cite{Auer}.  More precisely, after replacing the parameter $s$
in Matignon--Rocher \cite[Proposition~6.10]{MR} by $s+1$, the function
field of $\cR_{p,s}$ is their two-generator layer $L_{2,0}$.  The same
two Artin--Schreier equations also occur as the initial Ree-type layer
inside the larger covering constructed by Chr\'etien--Matignon
\cite[Section~3]{CM} to produce big actions with nonabelian derived
subgroup.  Thus no novelty is claimed for the equations or for the
existence of the natural translation action.  The principal question
here is structural and automorphism-theoretic: how much of the
exceptional symmetry of the classical Ree curve survives when this
known ray-class layer is placed in another odd characteristic?  We
determine its complete wild ramification structure and, for $p>3$, its
full geometric automorphism group, and compare the result with the
characteristic-three case.

Borges, Niemann and Zini \cite[Sections~2--5]{BNZ} study double
Artin--Schreier extensions of rational function fields and the lifting
of automorphisms from the rational base.  Their full-group results
concern the Singer, Zieve and extended Zieve families.  Their
preliminaries also include a one-pole lemma giving total ramification
and $p$-rank zero, and the classical genus-sum method for degree-$p$
subextensions.  We identify the application of the one-pole lemma in
Section~\ref{sec:basic} and record the common genus identity in
Section~\ref{sec:prelim}.  The explicit ramification filtration,
Jacobian factor grouping and full geometric automorphism group are
computed here for the Ree-type ray-class layer.

The natural subgroup $U$ of order $q^3$ is a big action, and the
unique point $P_\infty$ above the pole of $x$ makes the curve Castle.
Our main geometric result is the following.

\begin{theorem}\label{thm:intro-aut}
For every odd prime $p$, the pointed curve $(\cR_{p,s},P_\infty)$ is a
Castle curve.  If $p>3$, then
\[
 \Aut_{\overline{\F}_q}(\cR_{p,s})
 =\Aut_{\F_q}(\cR_{p,s})
 \cong U\rtimes\F_q^\times,
 \qquad |U|=q^3,
\]
and hence
\[
 |\Aut_{\overline{\F}_q}(\cR_{p,s})|=q^3(q-1).
\]
\end{theorem}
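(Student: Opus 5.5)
The plan has four steps: compute the pole numbers at $P_\infty$, check the Castle conditions, write down the explicit group $U\rtimes\F_q^\times$, and show for $p>3$ that nothing else acts.

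\textbf{Pole numbers and the Castle property.} Since $y$ and $z$ are Artin--Schreier generators whose right-hand sides have a single pole, at $x=\infty$, the one-pole lemma shows that $P_\infty$ is totally ramified in the elementary abelian cover $\cR_{p,s}\to\mathbb P^1_x$ of degree $q^2$. It follows that $-v_{P_\infty}(x)=q^2$, $-v_{P_\infty}(y)=q(q+q_0)$ and $-v_{P_\infty}(z)=q(q+2q_0)$. I would also use the standard auxiliary functions of the Ree curve, namely $w_1=x^{3q_0}-y^{q_0}$ and the further combinations of $x,y,z$ of the same kind. I would compute their pole orders and check that, together with $x,y,z$, they generate a Weierstrass semigroup $H(P_\infty)$ whose genus equals $g(\cR_{p,s})$, computed from the ramification filtration. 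Then the Castle conditions hold: $P_\infty$ is rational, and $\#\cR_{p,s}(\F_q)=q^3+1$ equals $qh+1$, where $h=q^2$ is the multiplicity of $H$. The point count comes from $x\in\F_q$ giving $q^2$ rational points each, because $x^q-x=0$ makes both right-hand sides vanish.

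\textbf{The explicit group.} Set $U$ to be the maps
\[
 x\mapsto x+a,\quad y\mapsto y+a^{q_0}x+b,\quad z\mapsto z+\phi(a,b)\,\cdot\,\ldots+c,
\]
for $a,b,c\in\F_q$, with the correction terms in $z$ chosen as for the Ree stabilizer. The torus $\F_q^\times$ acts by $x\mapsto\lambda x$, $y\mapsto\lambda^{q_0+1}y$, $z\mapsto\lambda^{2q_0+1}z$. I would verify directly that the defining equations are preserved, so that $U\rtimes\F_q^\times$ has order $q^3(q-1)$ and is defined over $\F_q$.

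\textbf{Fullness for $p>3$.} Let $G=\Aut_{\overline\F_q}(\cR_{p,s})$. The main step is to show that $G$ fixes $P_\infty$; I expect this to be the main obstacle. The plan is first to show that $U$ is a Sylow $p$-subgroup of $G_{P_\infty}$ and that $(\cR_{p,s},U)$ is a big action, since $|U|/g^2$ exceeds $4/(p-1)^2$. By Lehr--Matignon/Stichtenoth-type results on big actions, $G_{P_\infty}^{(1)}=U$. Then suppose $G$ moves $P_\infty$. Since the $\F_q$-rational points form a single $U$-orbit of size $q^3$ together with $P_\infty$, the group $G$ would act $2$-transitively on this set of $q^3+1$ points, with one-point stabilizer $U\rtimes C$. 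Here I would invoke the Hering--Kantor--Seitz classification of $2$-transitive groups with a regular normal $p$-subgroup in the point stabilizer. This forces $G$ to be ${}^2G_2(q)$, which requires $p=3$; alternatively it is excluded by a genus mismatch, since Hurwitz counting with the known genus rules out $\mathrm{PSU}(3,q)$ and $\mathrm{Sz}$. Before applying the classification I would need to justify that $G$ is finite and that it preserves the orbit $\cR_{p,s}(\F_q)$. For the latter I would argue that $P_\infty$ is the unique point of its Weierstrass semigroup type, or that it has maximal $U$-ramification, and that $G$ normalizes the Frobenius-linear series.

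\textbf{Conclusion.} Once $G=G_{P_\infty}$, it equals $U\rtimes C$ with $C$ cyclic of order prime to $p$. Then $C$ acts on the Riemann--Roch spaces $L(q^2P_\infty)=\langle1,x\rangle$ and on those for $y$ and $z$, which forces the form $x\mapsto\lambda x+a$ with $\lambda^{q_0+1}$ and $\lambda^{2q_0+1}$ compatible with the equations. This gives $\lambda\in\F_q^\times$, so $|C|=q-1$, and all automorphisms are defined over $\F_q$.
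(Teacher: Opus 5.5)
The genuine gap is the step you yourself single out as the main obstacle: proving that $G=\Aut_{\overline{\F}_q}(\cR_{p,s})$ fixes $P_\infty$.

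Your 2-transitivity argument only starts once $G$ is known to preserve $\cR_{p,s}(\F_q)$, and nothing in the plan supplies this.
\begin{itemize}
\item A geometric automorphism is not known in advance to commute with the $q$-Frobenius. The equality $\Aut_{\overline{\F}_q}=\Aut_{\F_q}$ is part of the conclusion.
\item The ``Frobenius linear series'' device for Hermitian, Suzuki and Ree curves rests on a Frobenius relation such as $\ell P+\mathrm{Fr}(P)\sim(\ell+1)P_0$ for $\F_{\ell^2}$-maximal curves, or its Deligne--Lusztig analogues. For $p>3$ your curve is not supersingular (Corollary~\ref{cor:nonss}), hence not maximal or minimal over any extension, so you have no such relation to work with.
\item Your other option, that $P_\infty$ is the only point with its Weierstrass semigroup, would give $G=G_{P_\infty}$ outright and make the classification superfluous. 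But you have no means to prove it.
\item Even granting invariance, the Hering--Kantor--Seitz identification and the ``genus mismatch'' are only sketched.
\end{itemize}
The paper needs no invariant set of points. Since $|G^{(1)}_{P_\infty}|\ge|U|>q_0\,g>\frac{p}{p-1}g$, Theorem~1.3 of Giulietti--Korchm\'aros gives that either $G$ fixes $P_\infty$ or the curve lies in one of four exceptional families. The Suzuki and Ree families are excluded by $p>3$. The Hermitian one is excluded by non-supersingularity (a genus comparison would also work there). The hyperelliptic one is excluded because the nonabelian group $U$ would embed in $\operatorname{PGL}_2(\overline{\F}_q)$, whose finite $p$-subgroups are elementary abelian.

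Two further steps are unsupported.

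\textbf{First}, $G^{(1)}_{P_\infty}=U$ does not follow from ``Lehr--Matignon/Stichtenoth-type results''. A big action yields structure (e.g.\ $D(A_1)=D(U)$ by Matignon--Rocher) but not that $U$ exhausts the wild inertia group. Moreover, the big-action criterion is $|U|/g>2p/(p-1)$; your inequality $|U|/g^2>4/(p-1)^2$ is false here, since $|U|/g^2\approx4/(qP^2)$. The paper proves $A_1=U$ in Lemma~\ref{lem:E=Fq} by a conductor comparison. It forces each induced translation $x\mapsto x+\beta$ to satisfy $[F_{\lambda,0}(x+\beta)]=[F_{\lambda,0}(x)]$, hence $\beta\in\F_q$.

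Your own Riemann--Roch step could replace this if you applied it to every $\sigma\in G_{P_\infty}$ rather than only to a complement $C$:
\begin{itemize}
\item $\ell(q^2P_\infty)=2$ gives $\sigma(x)=\lambda x+a$.
\item If you establish $L((q^2+qq_0)P_\infty)=\langle1,x,y\rangle$, write $\sigma(y)=\alpha y+\delta x+\gamma$.
\item Comparing coefficients of $y$, $x^{q+q_0}$, $x^{q_0+1}$, $x^{q_0}$ and $1$ in the first equation gives $\alpha=\lambda^{q_0+1}$, $\lambda^q=\lambda$, $a^q=a$ and $\gamma^q=\gamma$.
\item The automorphisms fixing $x$ and $y$ form $Z$, so this bounds $|G_{P_\infty}|$ by $q^3(q-1)$.
\end{itemize}

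\textbf{Second}, for the Castle property you never address the symmetry of $H(P_\infty)$, which is half of the definition. The paper gets it at once from $(dx)=(2g-2)P_\infty$, because $\cR_{p,s}\to\mathbb P^1_x$ is ramified only, and totally, at $P_\infty$. It gets $m(P_\infty)=q^2$ from $(x)_\infty=q^2P_\infty$ and the Lewittes bound, so no generators of $H(P_\infty)$ are needed. Your $w_1$ is also not the right function: the analogue of the Ree function is $y^P-x^{P+1}$, of pole order $q^2+qP$.
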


For fixed $p>3$ this group is genuinely large in the genus scale:
\[
 |\Aut(\cR_{p,s})|=\Theta(g^{8/5}),\qquad
 |U|=\Theta(g^{6/5}),
\]
so $|\Aut(\cR_{p,s})|/g\to\infty$ as $s\to\infty$.  For $p=3$, the
subgroup in Theorem~\ref{thm:intro-aut} is only the stabilizer of
$P_\infty$ in the Ree group, whose order is
$q^3(q-1)(q^3+1)$.

This jump has a conceptual explanation in the classification of
Giulietti--Korchm\'aros \cite[Theorem~1.3]{GK}.  The first ramification
group at $P_\infty$ contains $U$, and our estimate $|U|/g>q_0$ places
it above their large-wild-stabilizer threshold.  For $p>3$ the
exceptional alternatives in that theorem are excluded, and the full
automorphism group lies in the fixed-point branch.  When $p=3$, by
contrast, the curve is precisely the exceptional Deligne--Lusztig Ree
curve.  Thus the general classification predicts the same
characteristic-three dichotomy that appears in the explicit group
calculation.

The arithmetic geometry complements this automorphism-theoretic
picture.  Set
\begin{equation}\label{eq:Y-Z}
 \cY_{p,s}:\quad y^q-y=x^{q_0}(x^q-x),
 \qquad
 \cZ_{p,s}:\quad z^q-z=x^{2q_0}(x^q-x).
\end{equation}
The two quotients account for the whole Jacobian.

\begin{theorem}\label{thm:intro-decomp}
There is an isogeny over $\F_q$
\[
 \Jac(\cR_{p,s})
 \sim_{\F_q}
 \Jac(\cY_{p,s})\times\Jac(\cZ_{p,s})^q.
\]
Moreover, $\Jac(\cY_{p,s})$ is geometrically supersingular.
Consequently
\[
 \NP(\cR_{p,s})
 =\{1/2\}^{(q-1)pq_0}
 \sqcup q\,\NP(\cZ_{p,s}),
\]
where slopes are counted with multiplicity.
\end{theorem}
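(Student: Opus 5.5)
The plan is to use the Kani--Rosen decomposition for the elementary abelian group $G=\{(y,z)\mapsto(y+a,z+b):a,b\in\F_q\}\cong(\mathbb Z/p)^{2(2s+1)}$ acting on $\cR_{p,s}$ over $\mathbb P^1_x$. Equivalently, we will use the standard genus/$L$-polynomial formula for elementary abelian Artin--Schreier covers: the $L$-polynomial of $\cR_{p,s}$ is the product, over the nontrivial $\F_p$-linear functionals, of the $L$-polynomials of the degree-$p$ subcovers
\[
 w^p-w=\Tr(\alpha x^{q_0}(x^q-x)+\beta x^{2q_0}(x^q-x)),\qquad (\alpha,\beta)\in\F_q^2\setminus\{0\}.
\]
Here $\Tr$ is the trace $\F_q\to\F_p$ composed appropriately, i.e.\ $w^p-w=\sum_i(\alpha f+\beta h)^{p^i}$ after reduction. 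This yields an $\F_q$-isogeny $\Jac(\cR_{p,s})\sim\prod_{(\alpha,\beta)\ne0}\Jac(C_{\alpha,\beta})$, with $C_{\alpha,\beta}$ the corresponding degree-$p$ quotient. Grouping the pairs gives the same formula for $\Jac(\cY_{p,s})\sim\prod_{\alpha\ne0}\Jac(C_{\alpha,0})$ and $\Jac(\cZ_{p,s})\sim\prod_{\beta\ne0}\Jac(C_{0,\beta})$. We can then read both off from the genus identity recorded in Section~\ref{sec:prelim}, which is the same sum over subcovers.

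The key step is to show that for $\beta\neq0$ and arbitrary $\alpha$, the curve $C_{\alpha,\beta}$ is $\F_q$-isomorphic to $C_{0,\beta}$, so that each $\beta\ne0$ contributes $q$ copies, one for each $\alpha$. The natural tool is the substitution $x\mapsto x+c$ with $c\in\F_q$, which preserves $x^q-x$. Under it, $x^{2q_0}(x^q-x)\mapsto (x^{2q_0}+2c^{q_0}x^{q_0}+c^{2q_0})(x^q-x)$. The term $c^{2q_0}(x^q-x)$ is an Artin--Schreier coboundary after taking traces, since $\Tr$ is killed modulo $\wp(\F_q(x))$ up to constants; we will verify that $\Tr(\gamma x^q-\gamma x)$ is a coboundary. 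Thus $\beta h(x+c)\equiv \beta h(x)+2\beta c^{q_0} f(x)$ modulo coboundaries, and choosing $c$ with $2\beta c^{q_0}=\alpha$ is possible since $p$ is odd and $c\mapsto c^{q_0}$ is bijective on $\F_q$. This isomorphism is defined over $\F_q$, which is what is needed for an $\F_q$-isogeny. Collecting terms gives $\Jac(\cR)\sim\prod_{\alpha\ne0}\Jac(C_{\alpha,0})\times\prod_{\beta\ne0}\Jac(C_{0,\beta})^q\sim\Jac(\cY)\times\Jac(\cZ)^q$. Equivalently, the automorphism $(x,y,z)\mapsto(x+c,\,y+\dots,\,z+2c^{q_0}y+\dots)$ lying in $U$ mixes the $y$- and $z$-layers.

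For supersingularity of $\Jac(\cY_{p,s})$, I would invoke Remark~\ref{rem:Y=GS}, which identifies $\cY_{p,s}$ with the generalized Suzuki curve of Borges--Coutinho \cite{BC}. Their $L$-polynomial has all reciprocal roots of the form $\sqrt q$ times a root of unity, so the curve is geometrically supersingular. If that is not directly available, an alternative is the quadratic-form route: $\Tr(\alpha x^{q_0+q}-\alpha x^{q_0+1})$ is, modulo coboundaries, $\Tr$ of a $p$-linearized quadratic form $xL(x)$, and curves $w^p-w=xL(x)$ are supersingular by van der Geer--van der Vlugt. The genus of $\cY$ is $(q-1)pq_0/2$ from the one-pole computation, since the pole order at infinity is $q+q_0$ and $g=(q-1)(q+q_0-1)/2$ reduces after coboundary adjustment to $(q-1)pq_0/2$. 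This gives the multiplicity $(q-1)pq_0$ for slope $1/2$. The Newton polygon statement then follows because Newton polygons are additive under isogeny.

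I expect the main obstacle to be the coboundary bookkeeping in the translation step. One must check precisely that the shifted polynomial differs from $\beta h+\alpha f$ by an element of $\wp(\F_q[x])$ plus terms whose trace vanishes, and that the resulting map is an isomorphism of the covers defined over $\F_q$, not just over $\overline{\F}_q$.
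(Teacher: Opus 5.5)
Your argument is essentially the paper's. You use the elementary abelian (Kani--Rosen) factorization into degree-$p$ quotients, followed by the $\F_q$-rational translation $x\mapsto x+c$, which is the $x$-part of $u_c=u(c,0,0)\in U$. Your identity $\beta h(x+c)=\beta h(x)+2\beta c^{q_0}f(x)+\beta c^{2q_0}(x^q-x)$ is the equation-level form of the paper's conjugation rule $[\lambda:\mu]\mapsto[\lambda+2\mu c^{q_0}:\mu]$ on character lines. It correctly yields $\F_q$-isomorphisms $C_{[\alpha:\beta]}\cong C_{[0:\beta]}$ for $\beta\ne0$.

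Two bookkeeping steps are wrong as written. The first is the quotient equation $w^p-w=\sum_{i=0}^{2s}(\alpha f+\beta h)^{p^i}$. Since $F^{p^i}\equiv F$ modulo $\wp(\F_q(x))$, its right-hand side is congruent to $(2s+1)(\alpha f+\beta h)$. That is a coboundary whenever $p\mid 2s+1$ (e.g.\ $p=3$, $s=1$), so your $C_{\alpha,\beta}$ would then be a split cover rather than a degree-$p$ quotient. The trace-type sum belongs on the generator: $w=\sum_i(\alpha y+\beta z)^{p^i}$ satisfies $w^p-w=\alpha f+\beta h$. The term $\gamma(x^q-x)$ with $\gamma\in\F_q$ is already the coboundary $\wp\bigl(\sum_i(\gamma x)^{p^i}\bigr)$, so no trace is needed there.

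The second is the indexing. The degree-$p$ subcovers correspond to $\F_p$-lines, not to nonzero pairs, since $C_{\alpha,\beta}=C_{a\alpha,a\beta}$ for $a\in\F_p^\times$. So your isogeny $\Jac(\cR_{p,s})\sim\prod_{(\alpha,\beta)\ne0}\Jac(C_{\alpha,\beta})$, and the analogous ones for $\cY_{p,s}$ and $\cZ_{p,s}$, overcount dimension by a factor $p-1$. The overcounts cancel in your last line, but the correct count is the paper's. Fix one representative $\beta$ in each class of $\F_q^\times/\F_p^\times$. The lines whose second coordinate lies in that class are exactly $[\alpha:\beta]$ with $\alpha\in\F_q$, and all $q$ of them are $\F_q$-isomorphic to $C_{[0:\beta]}$; this is where the exponent $q$ comes from.

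Two smaller points concern $\cY_{p,s}$. If you use the van der Geer--van der Vlugt route for its supersingularity (as the paper does), you must first pass to $\F_{q^2}$, because over $\F_q$ one has $\deg R_\lambda=p^{s+1}$ with $s+1>(2s+1)/2$. Also, its genus has to be computed from the reduced degree $P+1$, not from the pole order $q+q_0$, which is divisible by $p$.
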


The same Artin--Schreier description makes the $p$-rank and wild
ramification completely explicit.

\begin{theorem}\label{thm:intro-prank}
For every odd prime $p$,
\[
 \gamma(\cR_{p,s})=\gamma(\cY_{p,s})=\gamma(\cZ_{p,s})=0.
\]
Let
\[
 V:=\Gal\bigl(\F_q(\cR_{p,s})/\F_q(x)\bigr)\cong(\F_q^2,+)
\]
and let $Z\le V$ be the subgroup acting trivially on $y$.  Then the
upper ramification jumps of $V$ at $P_\infty$ are $P+1$ and $P+2$,
where $P=pq_0$, while the lower jumps are $P+1$ and $P+q+1$.
\end{theorem}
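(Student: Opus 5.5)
The plan has two parts: first the $p$-rank statement, then the ramification filtration of $V$ at $P_\infty$.

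\textbf{The $p$-rank.} Each of $\F_q(\cY_{p,s})$, $\F_q(\cZ_{p,s})$ and $\F_q(\cR_{p,s})$ is an elementary abelian $p$-extension of $\F_q(x)$, and only the place $x=\infty$ ramifies. The reason is that every degree-$p$ subextension is given by
\[
w^p-w=\Tr_{\F_q/\F_p}\bigl(a\,x^{q_0}(x^q-x)+b\,x^{2q_0}(x^q-x)\bigr),\qquad (a,b)\in\F_q^2\setminus\{0\},
\]
and the right-hand side is a polynomial in $x$. By the one-pole lemma recalled in Section~\ref{sec:prelim}, the cover is therefore totally ramified over $\infty$ and unramified elsewhere. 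Deuring--Shafarevich, applied to the $p$-group cover of $\mathbb P^1$, then gives
\[
\gamma-1=|G|(0-1)+(|G|-1)=-1,
\]
so $\gamma=0$ for all three curves.

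\textbf{Upper jumps.} I would use the fact that upper jumps are compatible with quotients. For a character $\chi_{a,b}$ of $V\cong\F_q^2$, the conductor exponent equals the reduced pole order plus one. That pole order is found by reducing the polynomial modulo $\wp(\overline{\F}_q[x])$ to one whose monomials all have degree prime to $p$.

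Write $x^{q_0}(x^q-x)=x^{q+q_0}-x^{q_0+1}$. The monomial $x^{q+q_0}=(x^{pq_0+1})^{q_0}$ is equivalent modulo $\wp$ to $a^{1/q_0}x^{pq_0+1}$, which has degree $P+1$. The monomial $x^{q_0+1}$ has smaller degree.

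Similarly, $x^{2q_0}(x^q-x)$ contains $x^{q+2q_0}=(x^{pq_0+2})^{q_0}$, which reduces to degree $P+2$. Since $p$ is odd, $P+2$ is prime to $p$.

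Hence $\chi_{a,b}$ has conductor jump $P+2$ if $b\neq0$, and $P+1$ if $b=0$. I will need to check that the coefficients stay nonzero after the $q_0$-th-root reduction; this holds because the Frobenius twist of a nonzero coefficient is nonzero. So the upper jumps of $V$ are $P+1$ and $P+2$. The filtration is
\[
V^u=V \text{ for } u\le P+1,\qquad V^u=Z \text{ for } P+1<u\le P+2,
\]
where $Z$ is the subgroup fixing $y$, i.e.\ the kernel of all characters with $b=0$.

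\textbf{Lower jumps.} I would convert via Herbrand's function. The first lower jump equals the first upper jump, $P+1$. Between the two jumps the slope of $\psi$ is $(V:Z)=q$, so the second lower jump is
\[
(P+1)+q\bigl((P+2)-(P+1)\bigr)=P+q+1.
\]

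\textbf{Consistency check.} I would confirm these jumps against the genus from Hilbert's different formula. This gives
\[
2g-2=-2q^2+(q^2-1)(P+2)+(q^2-q)q,
\]
which should match the genus used in Theorem~\ref{thm:intro-decomp}, namely $g(\cY)+q\,g(\cZ)$.

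\textbf{Main obstacle.} The delicate step is the upper-jump claim for mixed characters with $a,b\neq0$: one must make sure that no cancellation lowers the degree $P+2$ term. This is safe because $x^{pq_0+1}$ and $x^{pq_0+2}$ have distinct degrees, and the top term comes only from $b$.
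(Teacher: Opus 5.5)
Your proposal is correct and follows essentially the paper's own route. You reduce $af+bg$ to a polynomial whose degree, $P+1$ or $P+2$, is prime to $p$. You read off the upper breaks from the conductors and intersect kernels to get $Z$. You then apply Herbrand's $\psi$ to get the lower jump $P+q+1$, and use total ramification at $P_\infty$ together with Deuring--Shafarevich for the $p$-ranks.

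There are two slips to fix, neither in the logical chain. First, the degree-$p$ subextension attached to $(a,b)$ is $w^p-w=af+bg$ with $w=\sum_{i=0}^{2s}(ay+bz)^{p^i}$; a trace on the right-hand side would only give $(2s+1)(af+bg)$ modulo $\wp$, which vanishes when $p\mid 2s+1$. Second, in your consistency check the $q$ lower indices $P+2\le i\le P+q+1$ contribute $q(q-1)$, not $(q^2-q)q$, to the different; with the corrected term, $2g-2=(q^2-1)P+q^2-q-2$ agrees with $g(\cY)+q\,g(\cZ)$.
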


The Newton-polygon calculation is used here mainly to distinguish the
arithmetic of the generalized curve from the classical Ree case.  The
degree-$p$ quotients of $\cZ_{p,s}$ have reduced pole order $pq_0+2$;
Zhu's Newton-over-Hodge theorem \cite{Zhu} gives the lower bound below,
while three explicit point counts give the upper bound.

\begin{theorem}\label{thm:intro-NP}
Let $p>3$.  Then
\[
 \frac1{pq_0+2}
 \le \lambda_{\min}(\cZ_{p,s})
 \le \frac13.
\]
In particular, $\cZ_{p,s}$ and $\cR_{p,s}$ are not supersingular and
$\cR_{p,s}$ is neither maximal nor minimal over any finite extension
of $\F_q$.
\end{theorem}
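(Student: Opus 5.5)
The plan is to prove the lower bound via Zhu's Newton-over-Hodge theorem and the upper bound via explicit point counts, and then deduce the non-supersingularity and non-maximality statements from Theorem~\ref{thm:intro-decomp}.

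\emph{Lower bound.} Decompose $\Jac(\cZ_{p,s})$ over the $\F_p$-subgroups of the Artin--Schreier group $\{z\mapsto z+c:\ c\in\F_q\}$. Up to isogeny, $\Jac(\cZ_{p,s})$ is the product of the Jacobians of the degree-$p$ quotients
\[
 w^p-w=\Tr_{\F_q/\F_p}\bigl(a\,x^{2q_0}(x^q-x)\bigr),\qquad a\in\F_q^\times,
\]
taken modulo $\F_p^\times$-scaling. This is the genus-sum identity recorded in Section~\ref{sec:prelim}.

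For each quotient we reduce the right-hand side modulo $\wp(\F_q(x))$ to a polynomial $f_a$ of degree coprime to $p$.
\begin{itemize}
\item The monomial $x^{2q_0+q}$ is Frobenius-reduced to the exponent $(2q_0+q)/q_0=pq_0+2$.
\item Here $q_0$ is a power of $p$, and $pq_0+2$ is prime to $p$ because $p>3$ and $p$ is odd.
\item The monomial $x^{2q_0+1}$ already has degree prime to $p$ and smaller than $pq_0+2$.
\end{itemize}
So each quotient is $w^p-w=f_a(x)$ with $\deg f_a=d:=pq_0+2$. Zhu's theorem then says the Newton polygon of the $L$-function lies above the Hodge polygon with slopes $1/d,2/d,\dots,(d-1)/d$. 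Normalizing by $\log q$ and passing to the curve's Newton polygon, the first slope of every factor is at least $1/d$. Hence $\lambda_{\min}(\cZ_{p,s})\ge 1/(pq_0+2)$.

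\emph{Upper bound.} Suppose, for contradiction, that all slopes of $\cZ_{p,s}$ exceed $1/3$. Then every Frobenius eigenvalue $\alpha$ satisfies $v_q(\alpha)>1/3$. Newton's identities (or direct symmetric-function bounds) then give
\[
 v_q\Bigl(\sum_i\alpha_i^m\Bigr)>\tfrac{m}{3}\quad\text{for } m=1,2,3.
\]
Therefore $\#\cZ_{p,s}(\F_{q^m})\equiv q^m+1\pmod{q^{\lceil m/3+\epsilon\rceil}}$. In particular $\#\cZ_{p,s}(\F_{q^3})\equiv 1\pmod{q^2}$, together with the analogous constraints for $m=1,2$; more precisely, the first three power sums force $e_1,e_2,e_3$ of the eigenvalues to be divisible by $q^{1},q^{1},q^{2}$.

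We then compute $\#\cZ_{p,s}(\F_{q^m})$ for $m=1,2,3$ by counting $x\in\F_{q^m}$ with $\Tr_{\F_{q^m}/\F_q}\bigl(x^{2q_0}(x^q-x)\bigr)=0$. Each solution contributes $q$ points. The case $m=1$ is trivial and gives $q^2+1$. For $m=2,3$ the trace becomes an explicit quadratic (respectively cubic-type) form in the $q$-Frobenius conjugates, and its zero count is a Gauss/character-sum evaluation. We aim to show that $\#\cZ(\F_{q^3})-q^3-1$ has $q$-adic valuation exactly $1$ times a unit factor involving $q_0$ and $p$. Equivalently, $v_q(e_3)<1$, i.e.\ some Newton slope is at most $1/3$.

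\emph{Main obstacle.} The hard step is the cubic count over $\F_{q^3}$. We would first try to write
\[
 \Tr_{\F_{q^3}/\F_q}\bigl(x^{2q_0}(x^q-x)\bigr)=\sum_{\sigma}x_\sigma^{2q_0}\bigl(x_{\sigma'}-x_\sigma\bigr)
\]
over the cyclic conjugates, and then evaluate $\sum_{c\in\F_q}\sum_{x}\psi\bigl(c\,\cdot\bigr)$. The condition $p>3$ should be exactly what makes the resulting sum nondegenerate; in characteristic $3$ the count must degenerate to the Ree-maximal value.

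\emph{Consequences.} Since $1/3<1/2$, the curve $\cZ_{p,s}$ is not supersingular. By Theorem~\ref{thm:intro-decomp}, $\NP(\cR_{p,s})$ contains $q\,\NP(\cZ_{p,s})$, so $\cR_{p,s}$ is not supersingular either. A curve that is maximal or minimal over some $\F_{q^m}$ has all Frobenius eigenvalues equal to $\pm q^{m/2}$, which forces supersingularity. Hence $\cR_{p,s}$ is neither maximal nor minimal over any finite extension of $\F_q$.
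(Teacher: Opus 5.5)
Your lower bound (reduce to a polynomial of degree $pq_0+2$ on each degree-$p$ factor, then apply Zhu's Hodge bound) and your final deductions match the paper's. One slip: the quotient attached to $a\in\F_q^\times$ is $w^p-w=a\,x^{2q_0}(x^q-x)$, with $w=\Tr_{\F_q/\F_p}(az)$. It is not $w^p-w=\Tr_{\F_q/\F_p}(\cdots)$, which is Artin--Schreier equivalent to $(2s+1)a\,x^{2q_0}(x^q-x)$ and collapses when $p\mid 2s+1$.

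The genuine gap is the upper bound. Its whole content is showing $v_q(S_3)\le 1$ for $S_3=q^3+1-\#\cZ_{p,s}(\F_{q^3})=q(q^2-M_3)$, i.e.\ $p\nmid M_3$. You leave exactly this as an unexecuted ``Gauss/character-sum evaluation'', plus a guess about where $p>3$ enters. Over $\F_p$ the phase $x\mapsto\Tr_{\F_{q^3}/\F_p}\bigl(c\,x^{2q_0}(x^q-x)\bigr)$ is a cubic form, and there is no general closed evaluation for such sums, so the plan as stated does not produce the count. The surrounding bookkeeping also needs fixing. The hypothesis $\lambda_{\min}>1/3$ only gives $v_q(e_1)>1/3$, $v_q(e_2)>2/3$, $v_q(e_3)>1$ and $v_q(S_3)>1$, i.e.\ $pq\mid S_3$; it does not give the divisibilities by $q,q,q^2$ that you state. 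What refutes it is $v_q(S_3)\le 1$, not ``$v_q(e_3)<1$''. The latter is in fact false here, since the paper's counts give $v_q(c_3)=1$.

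The missing idea is the paper's fibration.
\begin{itemize}
\item Put $\sigma(u)=u^q$ and substitute $x=y^P$ with $P=pq_0$. The trace becomes $B(y)=\Tr_{\F_{q^3}/\F_q}(y^2u^P)$ with $u=y-\sigma^{-1}(y)$. The map $y\mapsto u$ is onto $\ker\Tr_{\F_{q^3}/\F_q}$ with fibres $y+\F_q$.
\item On a fibre, $B(y+c)=B(y)+2cA(u)$ with $A(u)=\Tr(yu^P)$. So every fibre with $A(u)\ne0$ contains exactly one zero.
\item For $u\ne0$ set $t=\sigma(u)/u$, so $1+t+t\sigma(t)=0$. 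This is where $p\ne3$ enters: it lets you take $y=(u-\sigma(u))/3$, which gives $A(u)=u^{P+1}\bigl(1+t^P(1+t)\bigr)$.
\item Then $A(u)=0$ forces $t^P=\sigma^2(t)$. Since $\gcd(6s+3,3s+1)=1$, this gives $t\in\F_p$ and hence $t^2+t+1=0$.
\item Such exceptional $u$ exist only when $p\equiv1\pmod 3$, and on them one checks $B\ne0$.
\end{itemize}
This yields $M_3=q^2+q-1$ or $q^2-q+1$ according as $p\equiv2$ or $1\pmod 3$. Hence $S_3=\mp q(q-1)$ and $v_q(S_3)=1$.

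Once this count is in hand, your contradiction route is leaner than the paper's. The inequality $v_q(S_3)\ge 3\lambda_{\min}$ gives $\lambda_{\min}\le 1/3$ directly, with no need for the $\F_q$ and $\F_{q^2}$ counts or for Newton's identities for $c_1,c_2,c_3$.
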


Thus characteristic $3$ is exceptional in two distinct but compatible
ways: the classical Ree curve is supersingular and has the full Ree
group, whereas for $p>3$ the generalized Ree curve is
non-supersingular and its full geometric automorphism group is the
point stabilizer described above.

We finish with an application of a construction of Voloch
\cite{Voloch}.  A useful feature of the present family is
\[
 \cR_{p,s}(\F_{q^2})=\cR_{p,s}(\F_q).
\]
This produces an unramified abelian cover over $\F_{q^2}$ of degree
$L_{\cR}(-1)$ in which all $\F_{q^2}$-rational points of $\cR_{p,s}$
split completely.  The Jacobian decomposition above gives the factor
$L_{\cR}(-1)=L_{\cY}(-1)L_{\cZ}(-1)^q$.

\section{Preliminaries}\label{sec:prelim}

\subsection{Elementary abelian Artin--Schreier extensions}

For a field $K$ of characteristic $p$ we write
\[
 \wpmap(h):=h^p-h.
\]
The class $[f]\in K/\wpmap(K)$ determines an Artin--Schreier
character once the Galois group is identified with $\F_p$.  For
nonzero classes, the degree-$p$ fields
\[
 K(u),\quad u^p-u=f,
 \qquad\text{and}\qquad
 K(v),\quad v^p-v=g,
\]
coincide if and only if
\begin{equation}\label{eq:AS-orbits}
 [g]=a[f]\quad\text{in }K/\wpmap(K)
 \qquad\text{for some }a\in\F_p^\times.
\end{equation}
Indeed, the forward implication follows by choosing a generator
$\sigma$ of the common cyclic Galois group: after normalizing
$\sigma(u)-u=1$, one has $\sigma(v)-v=a\in\F_p^\times$, whence
$v-au\in K$ and therefore $g-af\in\wpmap(K)$.  The converse is
immediate.  Thus degree-$p$ Artin--Schreier extensions correspond to
one-dimensional $\F_p$-subspaces of $K/\wpmap(K)$.

For an algebraically closed field $k$ and $f\in k[x]$, its class modulo
$\wpmap(k[x])$ has a unique reduced polynomial representative after
the constant term is normalized to zero, where ``reduced'' means that
no nonconstant exponent is divisible by $p$; see
Matignon--Rocher~\cite[p.~890]{MR}.  If
\[
 C:\quad w^p-w=f(x)
\]
has reduced polynomial $f$ of degree $d$ with $p\nmid d$, then $C$
has a unique point above infinity and
\begin{equation}\label{eq:AS-genus}
 g(C)=\frac{(p-1)(d-1)}2.
\end{equation}
The conductor exponent of the corresponding character at infinity is
$d+1$; see \cite[Chapter III, \S7]{Stichtenoth}.

We shall repeatedly use the factorization attached to an elementary
abelian $p$-extension.  Let $E/K$ be such an extension, and let $E_H$
run over the degree-$p$ subextensions of $E/K$.  When $K$ is rational,
Duursma--Stichtenoth--Voss \cite[Corollary~6.7]{DSV} give the
$L$-polynomial factorization
\begin{equation}\label{eq:EA-factor}
 L_E(T)=\prod_H L_{E_H}(T).
\end{equation}
Taking degrees, using $\deg L_F=2g(F)$ for a function field $F$ with
full constant field, yields
\begin{equation}\label{eq:EA-genus}
 g(E)=\sum_H g(E_H).
\end{equation}
The corresponding Jacobian decomposition
\begin{equation}\label{eq:KR-isog}
 \Jac(E)\sim \prod_H\Jac(E_H)
\end{equation}
is a special case of the idempotent relations of Kani--Rosen
\cite{KR}.  The rationality of the base is essential for the simple
factorization \eqref{eq:EA-factor}.
For double Artin--Schreier extensions, the same genus-sum method is
used by Borges, Niemann and Zini \cite[Eq.~(2.4)]{BNZ}, following
Garcia--Stichtenoth \cite[Theorem~2.1]{GS}.  Here the $p$-ranks are
computed directly by the Deuring--Shafarevich formula
\eqref{eq:DS}.

\subsection{Maximal, minimal and supersingular curves}\label{sec:maxmin}

Let $C/\F_q$ be a smooth projective curve of genus $g$.  The Hasse--Weil
bound is
\[
 \bigl|\#C(\F_q)-(q+1)\bigr|\le 2g\sqrt q.
\]
The curve is called \emph{$\F_q$-maximal} (respectively
\emph{$\F_q$-minimal}) when equality holds with the upper (respectively
lower) sign.  More generally, one uses the same terminology after
extending constants to $\F_{q^n}$.  This is the convention used, for
example, in Borges--Coutinho \cite[\S2.1]{BC}.

Write
\[
 L_C(T)=\prod_{j=1}^{2g}(1-\alpha_jT),
 \qquad |\alpha_j|=\sqrt q.
\]
If $C$ is maximal over $\F_{q^n}$, then
\[
 \sum_{j=1}^{2g}\alpha_j^n=-2gq^{n/2};
\]
if it is minimal, the right-hand side is $+2gq^{n/2}$.  Equality in the
triangle inequality forces, respectively,
\[
 \alpha_j^n=-q^{n/2}\quad\text{for all }j,
 \qquad\text{or}\qquad
 \alpha_j^n=q^{n/2}\quad\text{for all }j.
\]
Thus every normalized Frobenius eigenvalue $\alpha_j/\sqrt q$ is a root
of unity.  In particular,
\begin{equation}\label{eq:maxmin-ss}
 C\text{ maximal or minimal over some finite extension}
 \quad\Longrightarrow\quad
 \Jac(C)\text{ is supersingular}.
\end{equation}
Equivalently, all Newton slopes are $1/2$.  We shall use the
contrapositive of \eqref{eq:maxmin-ss} in Section~\ref{sec:NP}.

\subsection{Supersingular curves of the form \texorpdfstring{$w^p-w=xR(x)$}{Artin--Schreier form}}

Let $R(x)\in\F_{p^m}[x]$ be a $p$-linearized polynomial of degree
$p^h$ with $0\le h\le m/2$.  Van der Geer and van der Vlugt prove that
the curve
\begin{equation}\label{eq:vGvV}
 w^p-w=xR(x)
\end{equation}
has Jacobian isogenous, over $\F_{p^m}$, to a product of supersingular
elliptic curves; see \cite[Theorem~13.7]{vGvV}.  In applications below
we pass from $\F_q$ to $\F_{q^2}$ in order to meet the numerical
condition $h\le m/2$.  Supersingularity is geometric and is therefore
unchanged by this base extension.

\subsection{Newton polygons and the Hodge lower bound}

For a smooth projective curve $C/\F_q$ of genus $g$ write
\[
 L_C(T)=\prod_{j=1}^{2g}(1-\alpha_jT)
       =1+c_1T+\cdots+c_{2g}T^{2g}.
\]
We normalize the $p$-adic valuation by $v_q(q):=1$.  The Newton
polygon of $C$ is the lower convex hull of the points
$(i,v_q(c_i))$.  Its slopes are the numbers $v_q(\alpha_j)$, counted
with multiplicity.  The curve is supersingular precisely when all
slopes are $1/2$.

We shall use Zhu's Newton-over-Hodge theorem \cite{Zhu}.  In the
one-pole polynomial case it implies the following: if
\[
 w^p-w=f(x),\qquad \deg f=d,\qquad p\nmid d,
\]
then the first Newton slope is at least $1/d$.  More precisely, the
Newton polygon lies above the Hodge polygon with slopes
$1/d,2/d,\ldots,(d-1)/d$ for each nontrivial additive character.

\subsection{The Deuring--Shafarevich formula}

Let $C$ be a curve over an algebraically closed field of
characteristic $p$, let $S$ be a finite $p$-subgroup of
$\Aut(C)$, and let $\gamma(C)$ denote the $p$-rank of $C$.  If
$\ell_1,\ldots,\ell_r$ are the lengths of the short orbits of $S$,
the Deuring--Shafarevich formula (see Korchm\'aros--Montanucci \cite[Eq.~(5)]{KM2019}) reads
\begin{equation}\label{eq:DS}
 \gamma(C)-1
 =
 |S|\bigl(\gamma(C/S)-1\bigr)
 +\sum_{i=1}^r(|S|-\ell_i).
\end{equation}
We use it below only in the particularly simple situation where
$C/S$ is rational and $S$ has a unique short orbit, consisting of one
totally ramified point.

\subsection{Castle curves}

Let $C/\F_q$ be a curve and $Q\in C(\F_q)$.  Put
\[
 m(Q):=\min(H(Q)\setminus\{0\}),
\]
where $H(Q)$ is the Weierstrass semigroup at $Q$.  Following
Munuera--Sep\'ulveda--Torres \cite{Castle}, the pointed curve $(C,Q)$
is Castle if $H(Q)$ is symmetric and
\[
 \#C(\F_q)=q\,m(Q)+1.
\]
We shall use the Lewittes bound
\begin{equation}\label{eq:Lewittes}
 \#C(\F_q)\le q\,m(Q)+1.
\end{equation}
We also use the standard criterion
\[
 H(Q)\text{ symmetric}
 \quad\Longleftrightarrow\quad
 (2g-2)Q\text{ is canonical}.
\]

\subsection{Big actions}

Let $k$ be algebraically closed of characteristic $p$, let $C/k$ have
genus at least two, and let $S$ be a $p$-subgroup of $\Aut_k(C)$.  The
pair $(C,S)$ is a big action if
\[
 \frac{|S|}{g(C)}>\frac{2p}{p-1}.
\]
We use Corollary~2.11 of Matignon--Rocher \cite{MR}: if
$A_{\infty,1}$ is the full wild inertia group at the unique ramified
point of a big action, then
\begin{equation}\label{eq:MR-derived}
 D(A_{\infty,1})=D(S).
\end{equation}
We shall also use Theorem~1.3 of Giulietti--Korchm\'aros \cite{GK}.
If $G\le\Aut_k(C)$ and $P\in C$ satisfy
\[
 |G_P^{(1)}|>\frac{p}{p-1}g(C),
\]
then either $G$ fixes $P$, or $C$ belongs to one of the four exceptional
families listed there: the odd hyperelliptic case, the Hermitian curve,
the Suzuki curve, or the Ree curve.

\section{The ray-class layer and the Castle property}\label{sec:basic}

Put
\[
 P:=pq_0=p^{s+1}.
\]
Since $q_0P=q$, for $a\in\F_q$ we write
\[
 a^{1/q_0}:=a^P;
\]
this is the inverse of the $q_0$-power Frobenius on $\F_q$.
Let $L:=\F_q(x,y,z)$ be the function field of $\cR_{p,s}$.
The equations \eqref{eq:family} are the first two equations of the
odd-characteristic ray-class family described by Auer \cite{Auer}.
With the parameter of Matignon--Rocher shifted from $s$ to $s+1$,
they are exactly the equations defining the layer $L_{2,0}$ in
\cite[Proposition~6.10]{MR}.  They also occur in the larger Ree-type
covering of Chr\'etien--Matignon \cite[Section~3]{CM}.

\begin{proposition}\label{prop:degree-p}
The extension $L/\F_q(x)$ is elementary abelian of degree $q^2$.
Its degree-$p$ quotients are represented by
\begin{equation}\label{eq:F-lambda-mu}
 F_{\lambda,\mu}(x):=
 \lambda^{1/q_0}x^{P+1}-\lambda x^{q_0+1}
 +\mu^{1/q_0}x^{P+2}-\mu x^{2q_0+1},
\end{equation}
where $(\lambda,\mu)\in\F_q^2\setminus\{(0,0)\}$, modulo multiplication
by $\F_p^\times$.
\end{proposition}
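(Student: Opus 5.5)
The plan is to treat $L/\F_q(x)$ as the compositum of two Artin--Schreier extensions of $\F_q$-linearized type. Write $A(T):=T^q-T$, which is $\F_p$-linear with kernel $\F_q$. Put $f_1:=x^{q_0}(x^q-x)$ and $f_2:=x^{2q_0}(x^q-x)$.

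\textbf{Galois and elementary abelian.} For every $(a,b)\in\F_q^2$ the map $(y,z)\mapsto(y+a,z+b)$ preserves the equations, because $A(a)=A(b)=0$. This gives $q^2$ automorphisms of $L$ over $\F_q(x)$, forming a group isomorphic to $(\F_q^2,+)$. Since $[L:\F_q(x)]\le q^2$, it suffices to show the degree equals $q^2$. The extension is then Galois with group $(\F_q^2,+)$, which is elementary abelian of rank $2(2s+1)$.

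\textbf{Degree-$p$ subfields.} For $\lambda\in\F_q$, the element $\Tr_{\F_q/\F_p}$ lets us write the $\F_p$-linear functionals on $\F_q^2$ as $(a,b)\mapsto\Tr(\lambda a+\mu b)$. The corresponding degree-$p$ subfields are generated by elements $w_{\lambda,\mu}$ with $\wpmap(w)=$ something explicit. Concretely, use the identity $\Tr(\lambda A(y))=\wpmap(\text{poly in }y)$. Setting
\[
w:=\sum_{i=0}^{2s}(\lambda y)^{p^i}\quad\text{(adjusted)},
\]
one gets $w^p-w=\lambda^{q}y^q-\lambda y=\lambda(y^q-y)$ for $\lambda\in\F_q$. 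Hence $w_{\lambda,\mu}:=\sum_i(\lambda y+\mu z)^{p^i}$ satisfies
\[
\wpmap(w_{\lambda,\mu})=\lambda f_1+\mu f_2=\lambda x^{q_0+q}-\lambda x^{q_0+1}+\mu x^{2q_0+q}-\mu x^{2q_0+1}.
\]

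\textbf{Reduction modulo $\wpmap(\F_q[x])$.} The terms $x^{q_0+1}$ and $x^{2q_0+1}$ have exponents prime to $p$ and are kept. For $\lambda x^{q+q_0}=\lambda (x^{P+1})^{q_0}$, note $q+q_0=q_0(P+1)$. Since $h^{p^j}\equiv h$ modulo $\wpmap$ and $q_0=p^s$, this term is congruent to $\lambda^{1/q_0}x^{P+1}$, using $a^{1/q_0}=a^P$. Likewise $q+2q_0=q_0(P+2)$, so $\mu x^{q+2q_0}\equiv\mu^{1/q_0}x^{P+2}$. This yields exactly $F_{\lambda,\mu}$, which is reduced because $P+1$, $P+2$, $q_0+1$ and $2q_0+1$ are prime to $p$ (recall $p$ odd).

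\textbf{Counting (the main step).} I must show that $F_{\lambda,\mu}\notin\wpmap(\overline{\F}_q(x))$ for $(\lambda,\mu)\ne(0,0)$. This shows simultaneously that the $\F_p$-span of $[f_1],[f_2]$ scaled by $\F_q$ has dimension $2(2s+1)$, which gives the degree $q^2$ by Kummer--Artin--Schreier theory, and that distinct lines give distinct subfields via \eqref{eq:AS-orbits}.

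By uniqueness of reduced representatives, it suffices that $F_{\lambda,\mu}\neq0$. If $\mu\ne0$, the coefficient of $x^{P+2}$ is $\mu^{1/q_0}\neq0$. If $\mu=0$, then $\lambda\ne0$ and the coefficient of $x^{P+1}$ is nonzero. The exponents are distinct: $P+2>P+1>2q_0+1>q_0+1$, since $P=pq_0\ge3q_0$. In each case $F_{\lambda,\mu}$ is nonzero of degree prime to $p$, so it is not in $\wpmap$.

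The map $(\lambda,\mu)\mapsto[F_{\lambda,\mu}]$ is $\F_p$-linear and injective. Its image is a $2(2s+1)$-dimensional space, so the degree is $p^{2(2s+1)}=q^2$. The degree-$p$ subfields correspond bijectively to lines, that is, to $(\lambda,\mu)\neq0$ modulo $\F_p^\times$.

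The only delicate point is the trace bookkeeping identifying the characters. I would check it via $\Tr(\lambda A(y))=\wpmap\bigl(\sum_{i<2s+1}(\lambda y)^{p^i}\bigr)$, which holds because $\lambda^{q}=\lambda$.
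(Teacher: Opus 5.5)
Your proposal is correct and follows essentially the same route as the paper: the elements $w_{\lambda,\mu}=\sum_i(\lambda y+\mu z)^{p^i}$, the reduction of $\lambda f_1+\mu f_2$ to $F_{\lambda,\mu}$ via $q+q_0=q_0(P+1)$ and $q+2q_0=q_0(P+2)$, injectivity of $(\lambda,\mu)\mapsto[F_{\lambda,\mu}]$ by the degree-prime-to-$p$ argument, and the squeeze $q^2\le[L:K]\le q^2$. The one step you leave implicit is that every degree-$p$ subfield is some $K(w_{\lambda,\mu})$; this follows either by counting the $(q^2-1)/(p-1)$ lines against the index-$p$ subgroups of $(\F_q^2,+)$, or, as the paper does, from $\tau_{c,d}(w_{\lambda,\mu})-w_{\lambda,\mu}=\Tr_{\F_q/\F_p}(\lambda c+\mu d)$.
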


\begin{proof}
Put $K:=\F_q(x)$ and
\[
 f:=x^{q_0}(x^q-x),\qquad
 g:=x^{2q_0}(x^q-x),
\]
so that $y^q-y=f$ and $z^q-z=g$.  Write
$m:=[\F_q:\F_p]=2s+1$.  We first prove the degree assertion without
using the character group of $L/K$.

For $(\lambda,\mu)\in\F_q^2$ define, inside $L$,
\[
 w_{\lambda,\mu}
 :=\sum_{i=0}^{m-1}(\lambda y+\mu z)^{p^i}.
\]
Telescoping and $\lambda^q=\lambda$, $\mu^q=\mu$ give
\begin{equation}\label{eq:w-lambda-mu}
 w_{\lambda,\mu}^p-w_{\lambda,\mu}
 =\lambda f+\mu g.
\end{equation}
Consider the $\F_p$-linear map
\[
 \Phi:\F_q^2\longrightarrow K/\wpmap(K),\qquad
 (\lambda,\mu)\longmapsto[\lambda f+\mu g].
\]
We claim that $\Phi$ is injective.  Since
\[
 q+q_0=q_0(P+1),\qquad q+2q_0=q_0(P+2),
\]
repeated Artin--Schreier reduction of the $q_0$-power exponents gives
\begin{equation}\label{eq:reduction-Phi}
 [\lambda f+\mu g]
 = [F_{\lambda,\mu}(x)]
 \quad\text{in }K/\wpmap(K),
\end{equation}
where $F_{\lambda,\mu}$ is the polynomial in
\eqref{eq:F-lambda-mu}.  All four nonconstant exponents occurring in
$F_{\lambda,\mu}$ are prime to $p$.  If $\mu\ne0$, its degree is
$P+2$; if $\mu=0$ and $\lambda\ne0$, its degree is $P+1$.

A nonzero polynomial $F\in\F_q[x]$ of positive degree $d$ with
$p\nmid d$ cannot lie in $\wpmap(K)$.  Indeed, if $F=h^p-h$ with
$h\in K$, then $h$ cannot have a finite pole, since such a pole would
remain a pole of $h^p-h$; hence $h\in\F_q[x]$.  If $h$ is
nonconstant, then $\deg(h^p-h)=p\deg h$, contrary to $p\nmid d$;
if $h$ is constant, then $F$ is constant.  Thus
$[F_{\lambda,\mu}]\ne0$ for every
$(\lambda,\mu)\ne(0,0)$, proving the injectivity of $\Phi$.

Hence the image of $\Phi$ is an $\F_p$-subspace of
$K/\wpmap(K)$ of dimension $2m$.  By elementary abelian
Artin--Schreier theory; see \cite[Propositions~1.1 and~1.2]{GS}, the compositum $M/K$ of the corresponding
cyclic degree-$p$ extensions has degree
\[
 [M:K]=p^{2m}=q^2.
\]
Equation \eqref{eq:w-lambda-mu} shows that all these cyclic extensions
are contained in $L$, so $M\subseteq L$.  On the other hand, the two
displayed equations for $y$ and $z$ give
\[
 [L:K]\le q^2.
\]
Therefore
\[
 q^2=[M:K]\le[L:K]\le q^2,
\]
so $L=M$ and $[L:K]=q^2$.

For every $(c,d)\in\F_q^2$, the translation
\[
 \tau_{c,d}:(x,y,z)\longmapsto(x,y+c,z+d)
\]
is a $K$-automorphism of $L$.  These $q^2$ automorphisms are distinct,
and since $[L:K]=q^2$ they constitute the full Galois group.  Thus
\[
 V:=\Gal(L/K)\cong(\F_q^2,+).
\]
Only now do we use the trace pairing.  Every character of $V$ has the
form
\[
 \chi_{\lambda,\mu}(\tau_{c,d})
 :=\Tr_{\F_q/\F_p}(\lambda c+\mu d),
 \qquad (\lambda,\mu)\in\F_q^2,
\]
and the trace pairing makes $(\lambda,\mu)\mapsto\chi_{\lambda,\mu}$
an isomorphism $\F_q^2\simeq V^\vee$.  Moreover,
\[
 \tau_{c,d}(w_{\lambda,\mu})-w_{\lambda,\mu}
 =\chi_{\lambda,\mu}(\tau_{c,d}),
\]
so the fixed field of $\ker\chi_{\lambda,\mu}$ is generated over $K$
by $w_{\lambda,\mu}$ and has the Artin--Schreier equation
\eqref{eq:w-lambda-mu}, whose reduced right-hand side is
$F_{\lambda,\mu}$.  Finally, two nonzero characters have the same
kernel exactly when they differ by an element of $\F_p^\times$.
This gives precisely the asserted parametrization of the degree-$p$
quotients.
\end{proof}

The total ramification assertion in Corollary~\ref{cor:genus-points}
and the $\cR_{p,s}$ case of Proposition~\ref{prop:prank} are instances
of Borges--Niemann--Zini \cite[Lemma~2.1]{BNZ}.  To check the
hypotheses, work over $k:=\overline{\F}_q$ and put
\[
 u:=x^{q_0}(x^q-x),\qquad
 v:=x^{2q_0}(x^q-x),\qquad r:=v/u.
\]
Then $r=x^{q_0}$ and
\[
 x=r^P-\frac{u}{r},
\]
so $k(u,v)=k(x)$ and $k(x,y,z)=k(y,z)$.  The reduced-polynomial
argument of Proposition~\ref{prop:degree-p} remains valid over $k$;
hence $[k(y,z):k(u,v)]=q^2$.  Both $u$ and $v$ have infinity as
their unique pole.  The cited lemma therefore gives a unique point
above infinity, total ramification there, no other ramification, and
$\gamma(\cR_{p,s})=0$.  We retain the direct proofs below.

\begin{corollary}\label{cor:genus-points}
The curve $\cR_{p,s}$ has a unique point $P_\infty$ above the pole of
$x$, and
\begin{equation}\label{eq:genus-R}
 g(\cR_{p,s})
 =\frac{q-1}{2}\bigl[P+q(P+1)\bigr].
\end{equation}
Moreover,
\begin{equation}\label{eq:R-Fq}
 \#\cR_{p,s}(\F_q)=q^3+1.
\end{equation}
The two quotient curves in \eqref{eq:Y-Z} have genera
\begin{equation}\label{eq:gYgZ}
 g(\cY_{p,s})=\frac{(q-1)P}{2},\qquad
 g(\cZ_{p,s})=\frac{(q-1)(P+1)}{2}.
\end{equation}
\end{corollary}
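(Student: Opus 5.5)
The plan is to combine Proposition~\ref{prop:degree-p} with the genus-sum identity \eqref{eq:EA-genus} and the conductor/genus formula \eqref{eq:AS-genus}.

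\emph{Unique point and genus.} Each degree-$p$ quotient $E_H$ is given by $w^p-w=F_{\lambda,\mu}(x)$. The polynomial $F_{\lambda,\mu}$ is reduced, and its degree $d\in\{P+1,P+2\}$ is prime to $p$. Hence the pole of $x$ is totally ramified in every $E_H$, and $E_H$ is unramified at all finite places because $F_{\lambda,\mu}$ is a polynomial. So the inertia group at infinity in $V$ is not contained in any index-$p$ subgroup $H$. A proper subgroup of an elementary abelian $p$-group lies in some index-$p$ subgroup, so the inertia group is all of $V$. Therefore $P_\infty$ is the unique point above infinity, it is totally ramified, and it is $\F_q$-rational.

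For the genus, I count the quotients via the parametrization in Proposition~\ref{prop:degree-p}.
\begin{itemize}
\item The pairs with $\mu\neq0$ number $q(q-1)$. They give $q(q-1)/(p-1)$ quotients, each of genus $(p-1)(P+1)/2$.
\item The pairs with $\mu=0$, $\lambda\ne0$ number $q-1$. They give $(q-1)/(p-1)$ quotients, each of genus $(p-1)P/2$.
\end{itemize}
Summing by \eqref{eq:EA-genus} yields $\frac{q-1}{2}[P+q(P+1)]$, which is \eqref{eq:genus-R}.

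\emph{Genera of $\cY_{p,s}$ and $\cZ_{p,s}$.} The same computation applies, with Galois group $(\F_q,+)$. For $\cY_{p,s}$ the quotients are $w^p-w=\lambda^{1/q_0}x^{P+1}-\lambda x^{q_0+1}$: there are $(q-1)/(p-1)$ of them, each of genus $(p-1)P/2$, so $g(\cY_{p,s})=(q-1)P/2$. Similarly $g(\cZ_{p,s})=(q-1)(P+1)/2$. Injectivity of $\lambda\mapsto[\lambda f]$ (resp.\ $[\mu g]$) is already part of Proposition~\ref{prop:degree-p}, so both extensions have degree $q$.

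\emph{Rational points.} For each $a\in\F_q$ we have $a^q-a=0$, so $f(a)=g(a)=0$. The equations become $y^q-y=0$ and $z^q-z=0$, which have $q^2$ solutions $(y,z)\in\F_q^2$. Thus the place $x=a$ splits completely, because $V$ acts simply transitively on these $q^2$ solutions and the degree is $q^2$. These places are unramified since ramification occurs only at infinity. This gives $q\cdot q^2$ rational points, plus $P_\infty$.

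No other affine rational points exist, since a rational point has $x\in\F_q$. Hence $\#\cR_{p,s}(\F_q)=q^3+1$, which is \eqref{eq:R-Fq}.

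The only delicate step is the group-theoretic argument for total ramification of the whole compositum. The rest is bookkeeping.
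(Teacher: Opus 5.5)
Your proposal is correct and follows the paper's proof essentially step for step. You get total ramification because the inertia group at infinity lies in no index-$p$ subgroup, and the genera from \eqref{eq:EA-genus} by counting $(q-1)/(p-1)$ lines with $\mu=0$ and $q(q-1)/(p-1)$ lines with $\mu\neq0$. The $q^3$ affine rational points come from $x\in\F_q$. Your explicit splitting argument at $x=a$, via $V$-translates of residues, only fills in a detail the paper leaves implicit.
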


\begin{proof}
Every nontrivial degree-$p$ quotient is ramified only at infinity.
Hence the full extension is unramified at every finite place.  At
infinity every nonzero character is ramified, so the inertia subgroup
is contained in no character kernel; since $V$ is elementary abelian,
this forces the inertia subgroup to be all of $V$.  Thus infinity is
totally ramified and there is a unique point $P_\infty$ above it.
There are $(q-1)/(p-1)$ character lines with $\mu=0$ and
$q(q-1)/(p-1)$ with $\mu\ne0$.  Formula \eqref{eq:AS-genus} and the genus identity
\eqref{eq:EA-genus} give \eqref{eq:genus-R} and \eqref{eq:gYgZ}.
For $x\in\F_q$ both right-hand sides in \eqref{eq:family} vanish.
Thus there are $q^3$ affine rational points, together with
$P_\infty$.
\end{proof}

We now make the ramification at infinity explicit.  Let
\[
 V:=\Gal(L/\F_q(x))
   =\{\tau_{c,d}:(x,y,z)\mapsto(x,y+c,z+d):c,d\in\F_q\}
\]
and put
\[
 Z:=\{\tau_{0,d}:d\in\F_q\}.
\]

\begin{proposition}\label{prop:ramification}
The upper ramification filtration of $V$ at $P_\infty$ is
\[
 V^u=
 \begin{cases}
 V,&0\le u\le P+1,\\
 Z,&P+1<u\le P+2,\\
 1,&u>P+2.
 \end{cases}
\]
Equivalently, the lower ramification filtration is
\[
 V_i=
 \begin{cases}
 V,&0\le i\le P+1,\\
 Z,&P+2\le i\le P+q+1,\\
 1,&i\ge P+q+2.
 \end{cases}
\]
In particular, the different exponent at $P_\infty$ is
\begin{equation}\label{eq:different-explicit}
 d_\infty=(P+2)(q^2-1)+q(q-1).
\end{equation}
\end{proposition}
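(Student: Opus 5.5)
The plan is to read the upper filtration off the conductors of the characters of $V$, then convert to the lower numbering with Herbrand's function, and finally apply Hilbert's different formula.

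\textbf{Conductors of characters.} Since $V$ is abelian, the upper numbering is compatible with quotients. The upper jumps are therefore governed by the conductors of the characters $\chi_{\lambda,\mu}$. By Proposition~\ref{prop:degree-p}, the fixed field of $\ker\chi_{\lambda,\mu}$ is $K(w_{\lambda,\mu})$ with reduced Artin--Schreier polynomial $F_{\lambda,\mu}$. This polynomial has degree $P+2$ if $\mu\ne0$ and degree $P+1$ if $\mu=0$, $\lambda\ne0$, and in both cases the degree is prime to $p$. By the conductor statement following \eqref{eq:AS-genus}, the conductor exponent is $d+1$. Equivalently, the unique upper jump of the quotient $V/\ker\chi$ is $d$. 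Hence:
\begin{itemize}
\item for $\mu=0$, the character $\chi$ is trivial on $V^u$ exactly when $u>P+1$;
\item for $\mu\ne0$, it is trivial on $V^u$ exactly when $u>P+2$.
\end{itemize}
Since $V^u$ is the common kernel of the characters trivial on it (duality for elementary abelian groups), we get the following.
\begin{itemize}
\item For $u\le P+1$ no nonzero character kills $V^u$, so $V^u=V$. Here we also use that $V^0=V$ by total ramification, Corollary~\ref{cor:genus-points}, and that $V^u$ is constant on $(0,P+1]$.
\item For $P+1<u\le P+2$, $V^u$ is the intersection of the kernels of all $\chi_{\lambda,0}$. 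These characters are $\tau_{c,d}\mapsto\Tr(\lambda c)$, so the intersection is $\{c=0\}=Z$.
\item For $u>P+2$, $V^u=1$.
\end{itemize}
The one point to state carefully is the link between the upper jump of a quotient and the conductor $d+1$. This uses Herbrand's theorem $(V/H)^u=V^uH/H$, together with the fact that for a cyclic degree-$p$ Artin--Schreier extension with reduced pole order $d$ the jump is $d$.

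\textbf{Lower numbering.} Apply Herbrand's function $\psi$. On $[0,P+1]$ the group is all of $V$, so $\psi(u)=u$ and the lower jump is $P+1$. On $(P+1,P+2]$ the index $[V:Z]$ equals $q$, so $\psi$ has slope $q$ there. This gives $\psi(P+2)=P+1+q$, so $V_i=Z$ for $P+2\le i\le P+q+1$ and $V_i=1$ afterwards. The lower jumps are integers, as they must be.

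\textbf{Different.} Hilbert's formula gives $d_\infty=\sum_{i\ge0}(|V_i|-1)$. The indices $0\le i\le P+1$ contribute $(P+2)(q^2-1)$, and the $q$ indices $P+2\le i\le P+q+1$ contribute $q(q-1)$. This is \eqref{eq:different-explicit}.

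As a sanity check, Riemann--Hurwitz with $2g-2=-2q^2+d_\infty$ should reproduce \eqref{eq:genus-R}. I expect this consistency check to be the only delicate bookkeeping in the argument.
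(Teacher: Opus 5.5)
Your proposal is correct and follows the paper's own proof. The paper uses the same steps: it reads the conductor exponents $P+2$ (for $\mu=0$) and $P+3$ (for $\mu\ne0$) off the reduced polynomials $F_{\lambda,\mu}$, converts them to upper breaks $P+1$ and $P+2$, identifies $Z$ as the common kernel of the $\mu=0$ characters, gets the second lower jump $\psi(P+2)=P+q+1$ from Herbrand's function, and applies Hilbert's different formula. Your Riemann--Hurwitz check against \eqref{eq:genus-R} is an extra consistency verification not in the paper, and it does work out.
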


\begin{proof}
The nontrivial characters of $V$ are indexed by the lines
$[\lambda:\mu]$ in Proposition~\ref{prop:degree-p}.  Their conductor
exponents are $P+2$ when $\mu=0$ and $P+3$ when $\mu\ne0$.
For an abelian local extension, the conductor exponent of a
nontrivial character is one plus its upper break.  Hence the
characters with $\mu=0$ have upper break $P+1$, whereas the remaining
characters have upper break $P+2$.  The intersection of the kernels
of all characters with $\mu=0$ is precisely $Z$, which gives the
upper filtration.

Herbrand's function then gives the second lower jump:
\[
 \psi(P+2)
 =(P+1)+[V:Z]\bigl((P+2)-(P+1)\bigr)
 =P+q+1.
\]
The formula for the different follows from
$d_\infty=\sum_{i\ge0}(|V_i|-1)$.
\end{proof}

\begin{proposition}\label{prop:prank}
For every odd prime $p$,
\[
 \gamma(\cR_{p,s})
 =\gamma(\cY_{p,s})
 =\gamma(\cZ_{p,s})=0.
\]
\end{proposition}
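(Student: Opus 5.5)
We apply the Deuring--Shafarevich formula \eqref{eq:DS} to each of the three curves, working over $k=\overline{\F}_q$, since the $p$-rank is a geometric invariant.

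For $\cR_{p,s}$, take $S:=V=\Gal(L/\F_q(x))$, which has order $q^2$. The quotient $\cR_{p,s}/V$ is $\mathbb P^1_x$, whose $p$-rank is $0$. The orbits of $V$ are the fibres of $\cR_{p,s}\to\mathbb P^1_x$. By the proof of Corollary~\ref{cor:genus-points}, the extension $L/\F_q(x)$ is unramified at every finite place and totally ramified at infinity. Since unramified places split completely over $k$, every fibre above a finite point of $\mathbb P^1_x$ is a regular orbit of length $q^2$. The only short orbit is $\{P_\infty\}$, of length $1$. Formula \eqref{eq:DS} then gives
\[
 \gamma(\cR_{p,s})-1=q^2(0-1)+(q^2-1)=-1,
\]
so $\gamma(\cR_{p,s})=0$.

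For $\cY_{p,s}$ and $\cZ_{p,s}$ we use the same argument with $S\cong(\F_q,+)$ acting by $y\mapsto y+c$ (respectively $z\mapsto z+d$). Two facts are needed.

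\begin{itemize}
\item[(a)] These curves are Galois of degree $q$ over $\F_q(x)$ with group $\F_q$. This follows because $\F_q(x,y)$ and $\F_q(x,z)$ are the fixed fields of $Z$ and of $\{\tau_{c,0}\}$ inside $L$, respectively. Both subgroups have index $q$ in $V$, so each quotient field has degree exactly $q$ over $\F_q(x)$. Its degree-$p$ subextensions are the $F_{\lambda,0}$ (respectively $F_{0,\mu}$) from Proposition~\ref{prop:degree-p}.
\item[(b)] Each of these curves is unramified at finite places and has a single, totally ramified point above infinity. This is inherited from $\cR_{p,s}$: inertia groups of a quotient are the images of the inertia groups in $V$.
\end{itemize}

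With (a) and (b) in hand, \eqref{eq:DS} gives $\gamma-1=q(-1)+(q-1)=-1$, so $\gamma=0$ for both curves.

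The only step that needs care is identifying the orbit structure. We must argue that every point of $\cR_{p,s}$ over $k$ lying above a finite $x$-value has trivial stabilizer in $V$. This holds because the inertia group at such a point is trivial, and over an algebraically closed constant field the decomposition group equals the inertia group. This was established over $k$ in the discussion following Proposition~\ref{prop:degree-p}, where the reduced-polynomial argument remains valid over $k$.

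As an independent consistency check, one can also appeal to \cite[Lemma~2.1]{BNZ}, as noted before Corollary~\ref{cor:genus-points}.
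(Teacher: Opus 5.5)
Your proposal is correct and matches the paper's proof. Both apply the Deuring--Shafarevich formula \eqref{eq:DS} to $V$ acting on $\cR_{p,s}$ and to the order-$q$ translation groups acting on $\cY_{p,s}$ and $\cZ_{p,s}$, with $P_\infty$ as the only short orbit, which gives $\gamma-1=-|S|+(|S|-1)=-1$ in each case. Your extra details (the fixed fields, total ramification inherited by the quotients, and regular fibres over finite points of $\mathbb P^1_x$ over $\overline{\F}_q$) make explicit what the paper leaves implicit.
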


\begin{proof}
For $\cR_{p,s}$, the $p$-group $V$ has quotient
$\mathbb P^1_x$, and its only short orbit is the fixed point
$P_\infty$.  Hence \eqref{eq:DS} gives
\[
 \gamma(\cR_{p,s})-1
 =-q^2+(q^2-1)=-1.
\]
The same argument applies to the degree-$q$ Galois covers
$\cY_{p,s}\to\mathbb P^1_x$ and
$\cZ_{p,s}\to\mathbb P^1_x$: in each case infinity is the unique
ramified point and is totally ramified.  Thus both $p$-ranks are
zero.
\end{proof}

\begin{corollary}\label{cor:zero-prank-nonss}
If $p>3$, then $\cR_{p,s}$ and $\cZ_{p,s}$ are $p$-rank-zero,
non-supersingular curves.
\end{corollary}

\begin{proof}
Combine Proposition~\ref{prop:prank} with
Theorem~\ref{thm:slope-bracket} and
Theorem~\ref{thm:decomp}.
\end{proof}

\begin{proposition}\label{prop:Castle}
For every odd prime $p$, the pointed curve
$(\cR_{p,s},P_\infty)$ is Castle.  More precisely,
\[
 m(P_\infty)=q^2
\]
and $H(P_\infty)$ is symmetric.
\end{proposition}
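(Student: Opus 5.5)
We need to show three things: $m(P_\infty)=q^2$, the symmetry of $H(P_\infty)$, and then conclude via the point count \eqref{eq:R-Fq}. By Corollary~\ref{cor:genus-points}, $\#\cR_{p,s}(\F_q)=q^3+1$. Once $m(P_\infty)=q^2$ is established, the count reads $q\,m(P_\infty)+1$, which is the numerical half of the Castle condition.

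For the multiplicity, $P_\infty$ is totally ramified in $L/\F_q(x)$ of degree $q^2$, so $v_{P_\infty}(x)=-q^2$. This gives $q^2\in H(P_\infty)$, hence $m(P_\infty)\le q^2$. For the reverse inequality I would use the Lewittes bound \eqref{eq:Lewittes}: it gives $q^3+1\le q\,m(P_\infty)+1$, so $m(P_\infty)\ge q^2$. This avoids any direct semigroup computation.

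For symmetry I would use the criterion that $(2g-2)P_\infty$ is canonical. Take the differential $dx$. Since $L/\F_q(x)$ is unramified at every finite place (Corollary~\ref{cor:genus-points}), $dx$ has no zeros or poles at affine points. At $P_\infty$ its order is $-2q^2+d_\infty$, with $d_\infty$ taken from \eqref{eq:different-explicit}. Hence $(dx)=(-2q^2+d_\infty)P_\infty$, and $(2g-2)P_\infty$ is canonical automatically because any canonical divisor has degree $2g-2$.

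As a consistency check, I would verify that $-2q^2+d_\infty=(P+2)(q^2-1)+q(q-1)-2q^2$ equals $2g-2$ with $g$ from \eqref{eq:genus-R}. Indeed,
\[
2g-2=(q-1)(P+qP+q)-2=(q^2-1)P+q^2-q-2,
\]
and the other side is
\[
(q^2-1)P+2q^2-2+q^2-q-2q^2=(q^2-1)P+q^2-q-2.
\]
The two agree.

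No step looks like a real obstacle. The only point needing care is making sure the Lewittes bound is stated for $P_\infty\in\cR_{p,s}(\F_q)$, which holds since $P_\infty$ is the unique point over the $\F_q$-rational place at infinity.
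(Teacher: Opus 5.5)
Your proposal is correct and follows essentially the same route as the paper. Like the paper, you get $m(P_\infty)=q^2$ from the pole divisor $(x)_\infty=q^2P_\infty$ together with the Lewittes bound and $\#\cR_{p,s}(\F_q)=q^3+1$. You get symmetry from $(dx)=(2g-2)P_\infty$, using ramification only at $P_\infty$ and a degree count, which is what the paper calls Riemann--Hurwitz. Your verification against the explicit different $d_\infty$ is a correct but optional extra.
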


\begin{proof}
Since $[L:\F_q(x)]=q^2$ and $P_\infty$ is the unique point above the
pole of $x$,
\[
 (x)_\infty=q^2P_\infty.
\]
Hence $q^2\in H(P_\infty)$ and $m(P_\infty)\le q^2$.  On the other
hand, \eqref{eq:R-Fq} and the Lewittes bound \eqref{eq:Lewittes} give
\[
 q^3+1\le q\,m(P_\infty)+1,
\]
so $m(P_\infty)\ge q^2$.  Thus $m(P_\infty)=q^2$ and equality holds
in the Lewittes bound.

It remains to prove symmetry.  The extension $L/\F_q(x)$ is ramified
only at $P_\infty$, so its different is $\delta P_\infty$ for some
$\delta$.  Since the pole of $x$ is totally ramified with index
$q^2$,
\[
 (dx)_L=(\delta-2q^2)P_\infty.
\]
Riemann--Hurwitz gives $\delta-2q^2=2g-2$.  Consequently
\[
 (dx)_L=(2g-2)P_\infty,
\]
which is canonical.  Hence $H(P_\infty)$ is symmetric.
\end{proof}

We also record the natural automorphisms.  For
$a\in\F_q^\times$ and $b,c,d\in\F_q$ put
\begin{equation}\label{eq:psi}
\begin{aligned}
 \psi_{a,b,c,d}(x)&:=ax+b,\\
 \psi_{a,b,c,d}(y)&:=a^{q_0+1}y+ab^{q_0}x+c,\\
 \psi_{a,b,c,d}(z)&:=a^{2q_0+1}z
 +2a^{q_0+1}b^{q_0}y+ab^{2q_0}x+d.
\end{aligned}
\end{equation}
A direct substitution shows that these maps preserve both equations in
\eqref{eq:family}; composing two maps of the form \eqref{eq:psi} and
solving triangularly for the inverse shows that they form a group.
We denote it by $\Gamma$ and put
\[
 U:=\{\psi_{1,b,c,d}:b,c,d\in\F_q\}.
\]
Then
\begin{equation}\label{eq:Gamma-size}
 |U|=q^3,\qquad
 \Gamma=U\rtimes\F_q^\times,\qquad
 |\Gamma|=q^3(q-1).
\end{equation}

The group $U$ acts sharply transitively on the $q^3$ affine
$\F_q$-rational points of $\cR_{p,s}$ and fixes $P_\infty$.
Consequently $\Gamma$ has exactly two orbits on
$\cR_{p,s}(\F_q)$: the singleton $\{P_\infty\}$ and its affine
complement.

For later use, write $u(b,c,d):=\psi_{1,b,c,d}$ and regard these as
transformations of points.  We use the convention that products are
read from left to right: first apply $u(b,c,d)$ and then
$u(b',c',d')$.  Equivalently, the induced pull-backs on the function
field compose in the reverse order.  A direct calculation gives
\begin{equation}\label{eq:U-law}
\begin{aligned}
 u(b,c,d)u(b',c',d')
 =u(&b+b',\ c+c'+{b'}^{q_0}b,\\
 &d+d'+2{b'}^{q_0}c+{b'}^{2q_0}b).
\end{aligned}
\end{equation}

\section{The Jacobian decomposition}\label{sec:decomp}

For a nonzero pair $(\lambda,\mu)$ let $C_{[\lambda:\mu]}$ denote the
degree-$p$ quotient associated with the $\F_p$-line generated by
$(\lambda,\mu)$.  By \eqref{eq:EA-factor} and
\eqref{eq:KR-isog},
\begin{equation}\label{eq:R-allfactors}
 \Jac(\cR_{p,s})
 \sim_{\F_q}
 \prod_{[\lambda:\mu]}\Jac(C_{[\lambda:\mu]}).
\end{equation}
The factors with $\mu=0$ are precisely the factors occurring in
$\Jac(\cY_{p,s})$, while the factors $C_{[0:\mu]}$ are precisely those
occurring in $\Jac(\cZ_{p,s})$.

\begin{theorem}\label{thm:decomp}
There are factorizations
\begin{equation}\label{eq:L-decomp}
 L_{\cR}(T)=L_{\cY}(T)L_{\cZ}(T)^q
\end{equation}
and
\begin{equation}\label{eq:J-decomp}
 \Jac(\cR_{p,s})
 \sim_{\F_q}
 \Jac(\cY_{p,s})\times\Jac(\cZ_{p,s})^q.
\end{equation}
\end{theorem}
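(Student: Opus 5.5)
The plan is to compare the degree-$p$ quotients $C_{[\lambda:\mu]}$ one by one, using the automorphism group $U$ of \eqref{eq:psi}. The product \eqref{eq:R-allfactors}, together with the $L$-polynomial factorization \eqref{eq:EA-factor}, already writes $L_{\cR}(T)$ as $\prod_{[\lambda:\mu]}L_{C_{[\lambda:\mu]}}(T)$. Applying \eqref{eq:EA-factor} to the elementary abelian extensions $\F_q(x,y)/\F_q(x)$ and $\F_q(x,z)/\F_q(x)$ gives
\[
 L_{\cY}=\prod_{[\lambda:0]}L_{C_{[\lambda:0]}},\qquad
 L_{\cZ}=\prod_{[0:\mu]}L_{C_{[0:\mu]}}.
\]
The degree-$p$ subfields of these two extensions are exactly $\F_q(x,w_{\lambda,0})$ and $\F_q(x,w_{0,\mu})$ in the notation of Proposition~\ref{prop:degree-p}. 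So the factors with $\mu=0$ contribute exactly $L_{\cY}$. It remains to show that the $q(q-1)/(p-1)$ lines with $\mu\neq0$ split into groups of $q$, each group having the same $L$-polynomial as one factor $C_{[0:\mu]}$ of $\cZ$.

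The key step is an explicit $\F_q$-isomorphism $C_{[\lambda:\mu]}\cong C_{[0:\mu]}$ whenever $\mu\ne0$. Take $b\in\F_q$ with $b^{q_0}=-\lambda/(2\mu)$. Such $b$ exists and is unique because $p$ is odd and Frobenius is bijective on $\F_q$. Put $\psi:=\psi_{1,b,0,0}$. By \eqref{eq:psi},
\[
 \psi(\mu z)=\mu z+2\mu b^{q_0}y+\mu b^{2q_0}x=\mu z-\lambda y+\mu b^{2q_0}x .
\]
The last term is linear in $x$ with coefficient in $\F_q$, so the trace sum gives
\[
 \psi(w_{0,\mu})=w_{-\lambda,\mu}+h(x)\quad\text{for some } h\in\F_q[x].
\]
Since also $\psi(x)=x+b$, the automorphism $\psi$ carries the subfield $\F_q(x,w_{0,\mu})$ onto $\F_q(x,w_{-\lambda,\mu})$. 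Hence $C_{[0:\mu]}\cong_{\F_q}C_{[-\lambda:\mu]}$, so these two curves have equal $L$-polynomials and $\F_q$-isogenous Jacobians. As $\lambda$ ranges over $\F_q$, so does $-\lambda$.

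Next comes the counting. Fix a line $[0:\mu]$ and a representative $\mu$. The lines $[\lambda:\mu]$ with $\lambda\in\F_q$ are pairwise distinct, since scaling by $a\in\F_p^\times$ with $a\mu=\mu$ forces $a=1$. They exhaust all lines whose second coordinate lies in $\F_p^\times\mu$, so there are exactly $q$ of them, each isomorphic to $C_{[0:\mu]}$. Multiplying over the $(q-1)/(p-1)$ lines $[0:\mu]$ gives
\[
 \prod_{\mu\ne0}L_{C_{[\lambda:\mu]}}=L_{\cZ}^q,
\]
which is \eqref{eq:L-decomp}. The same grouping, applied to \eqref{eq:R-allfactors} and to its analogues (via \eqref{eq:KR-isog}) for $\cY$ and $\cZ$, gives \eqref{eq:J-decomp}. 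One could alternatively get the isogeny from \eqref{eq:L-decomp} by Tate's theorem.

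The main obstacle is the second step: checking carefully that $\psi$ really maps the fixed field of $\ker\chi_{0,\mu}$ onto that of $\ker\chi_{-\lambda,\mu}$ as fields over $\F_q$. In particular, the added term $h(x)$ must be a polynomial in $x$ alone, which holds because $x$ is fixed up to translation. Everything else is bookkeeping with \eqref{eq:EA-factor}.
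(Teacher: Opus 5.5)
Your proposal is correct and is essentially the paper's argument: both use the automorphisms $\psi_{1,b,0,0}\in U$ to identify, over $\F_q$, the $q$ quotients $C_{[\lambda:\mu]}$ (with $\mu$ fixed) with $C_{[0:\mu]}$, and then group the factors in the elementary abelian decomposition. The only difference is presentation: you track the Artin--Schreier generator $w_{0,\mu}$ directly under the field automorphism, whereas the paper conjugates the character kernels in $V$ using the group law \eqref{eq:U-law}.
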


\begin{proof}
The assertion for the $\mu=0$ factors follows directly from the
factorization for the elementary abelian extension
$\F_q(x,y)/\F_q(x)$.  Consider now a line with $\mu\ne0$.
Put
\[
 u_b:=u(b,0,0),\qquad
 v_{c,d}:=u(0,c,d)\in V.
\]
Using \eqref{eq:U-law} and the inverse obtained from the same formula,
we find, with our point-action convention,
\[
 u_bv_{c,d}u_b^{-1}=v_{c,d-2b^{q_0}c}.
\]
Consequently conjugation by $u_b$ permutes the kernels of the
characters
\[
 \chi_{\lambda,\mu}(v_{c,d})
 :=\Tr_{\F_q/\F_p}(\lambda c+\mu d)
\]
according to
\begin{equation}\label{eq:character-action}
 [\lambda:\mu]\longmapsto
 [\lambda+2\mu b^{q_0}:\mu].
\end{equation}
Indeed, if $H_{\lambda,\mu}:=\ker(\chi_{\lambda,\mu})$, then
$u_bH_{\lambda,\mu}u_b^{-1}=H_{\lambda+2\mu b^{q_0},\mu}$, because
$u_b^{-1}v_{c,d}u_b=v_{c,d+2b^{q_0}c}$.  Since $p$ is odd and
$b\mapsto b^{q_0}$ is a permutation of $\F_q$, for fixed
$\mu\ne0$ the $q$ lines
\[
 [\lambda:\mu],\qquad \lambda\in\F_q,
\]
form a single orbit, represented by $[0:\mu]$.  Taking one $\mu$ from
each class in $\F_q^\times/\F_p^\times$, the product of the
representative factors is exactly the elementary abelian factorization
of $\Jac(\cZ_{p,s})$.  Grouping the factors in
\eqref{eq:R-allfactors} gives \eqref{eq:J-decomp}, and the
$L$-polynomial identity follows as well.
\end{proof}

\begin{remark}[The generalized Suzuki quotient]\label{rem:Y=GS}
The quotient $\cY_{p,s}$ is not a new curve.  Let $\mathcal X_{GS}$ be
the generalized Suzuki curve of Borges--Coutinho \cite{BC} with their
parameter $t=s+1$.  Thus it is defined over the same field $\F_q$ by
\[
 v^q-v=x^P(x^q-x),\qquad P=pq_0=p^{s+1}.
\]
The degree-$p$ quotients of this extension have reduced
Artin--Schreier representatives
\[
 G_\mu(x)=\mu^{1/P}x^{q_0+1}-\mu x^{P+1},
 \qquad \mu\in\F_q^\times.
\]
For the degree-$p$ quotients of $\cY_{p,s}$,
Proposition~\ref{prop:degree-p} gives
\[
 F_{\lambda,0}(x)=\lambda^{1/q_0}x^{P+1}
                   -\lambda x^{q_0+1}.
\]
Taking $\mu=-\lambda^{1/q_0}$ and using $Pq_0=q$ gives
$G_\mu=F_{\lambda,0}$.  Hence the two elementary abelian degree-$q$
extensions of $\F_q(x)$ have the same degree-$p$ subfields and therefore
coincide.  In particular, the point counts and $L$-polynomial of
$\cY_{p,s}$ are already determined in \cite{BC}.  We retain the
supersingularity argument below because it is intrinsic to the
Jacobian decomposition used here.
\end{remark}

\begin{proposition}\label{prop:Y-ss}
The Jacobian $\Jac(\cY_{p,s})$ is geometrically supersingular.
Consequently
\begin{equation}\label{eq:NP-decomp}
 \NP(\cR_{p,s})
 =\{1/2\}^{(q-1)P}\sqcup q\,\NP(\cZ_{p,s}).
\end{equation}
\end{proposition}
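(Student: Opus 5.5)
The plan is to reduce to the van der Geer--van der Vlugt criterion \eqref{eq:vGvV}, applied one degree-$p$ quotient at a time.

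By Proposition~\ref{prop:degree-p} and the elementary abelian factorization \eqref{eq:KR-isog} applied to $\F_q(x,y)/\F_q(x)$, the Jacobian $\Jac(\cY_{p,s})$ is isogenous to the product of the $\Jac(C_{[\lambda:0]})$. Each such quotient is given by $w^p-w=\lambda f=\lambda x^{q_0}(x^q-x)$, with $w=w_{\lambda,0}$ from \eqref{eq:w-lambda-mu}. So it suffices to show that each of these curves is supersingular over $\overline{\F}_q$.

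The key observation is that $\lambda x^{q_0}(x^q-x)=x\cdot R_\lambda(x)$ with
\[
 R_\lambda(x):=\lambda\bigl(x^{q+q_0-1}-x^{q_0}\bigr),
\]
and $R_\lambda$ is not linearized. So I would not use $f$ itself. Instead I would use the reduced representative from \eqref{eq:reduction-Phi}, or a better-chosen representative of the class $[\lambda f]$ modulo $\wpmap(K)$. Note that
\[
 \lambda x^{q_0+q}=\lambda x^{q_0}x^{q}
\]
has the form $x^{q_0}\cdot x^{q}$, a product of two $p$-power monomials. Since Artin--Schreier equivalence allows replacing $a x^{p^i}x^{p^j}$ by $a^{1/p^i}x\,x^{p^{j-i}}$, each of the two terms of $\lambda f$ is equivalent to $x$ times a single $p$-power of $x$:
\[
 \lambda x^{q_0}x^{q}\equiv \lambda^{1/q_0}x\cdot x^{P},\qquad
 \lambda x^{q_0}x\equiv \lambda x\cdot x^{q_0}.
\]
This gives
\[
 [\lambda f]=\bigl[x\,R(x)\bigr],\qquad R(x)=\lambda^{1/q_0}x^{P}-\lambda x^{q_0},
\]
with $R$ $p$-linearized of degree $p^h=P=p^{s+1}$. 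This is exactly $F_{\lambda,0}$, so $C_{[\lambda:0]}$ is of the form \eqref{eq:vGvV}.

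The numerical condition $h\le m/2$ is the main obstacle. Over $\F_q=\F_{p^{2s+1}}$ we need $s+1\le (2s+1)/2$, which fails. Following the remark in the preliminaries, I would base-change to $\F_{q^2}=\F_{p^{4s+2}}$. There $m=4s+2$ and $h=s+1\le 2s+1$ holds, and the coefficients of $R$ still lie in $\F_{p^m}$. Theorem~13.7 of \cite{vGvV} then makes $\Jac(C_{[\lambda:0]})$ isogenous over $\F_{q^2}$ to a product of supersingular elliptic curves. Supersingularity is geometric, so each factor, and hence $\Jac(\cY_{p,s})$, is geometrically supersingular. Alternatively, one could invoke \cite{BC} through Remark~\ref{rem:Y=GS}, but the intrinsic route above is the one I would present.

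For \eqref{eq:NP-decomp}, Theorem~\ref{thm:decomp} gives $L_{\cR}=L_{\cY}L_{\cZ}^q$, so the multiset of Newton slopes of $\cR_{p,s}$ is the union of that of $\cY_{p,s}$ and $q$ copies of that of $\cZ_{p,s}$. By the first part, $\cY_{p,s}$ contributes $2g(\cY_{p,s})=(q-1)P$ slopes, all equal to $1/2$, using \eqref{eq:gYgZ}. This yields \eqref{eq:NP-decomp}.
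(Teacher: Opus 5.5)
Your proposal is correct and follows the paper's proof essentially step for step. You write each $\mu=0$ quotient as $w^p-w=xR_\lambda(x)$ with the $p$-linearized $R_\lambda=\lambda^{1/q_0}x^{P}-\lambda x^{q_0}$, base-change to $\F_{q^2}$ so that $s+1\le(4s+2)/2$, apply van der Geer--van der Vlugt, and then read off \eqref{eq:NP-decomp} from $L_{\cR}=L_{\cY}L_{\cZ}^q$ together with $2g(\cY_{p,s})=(q-1)P$.
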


\begin{proof}
For $\lambda\in\F_q^\times$, the degree-$p$ quotient belonging to
$[\lambda:0]$ has equation
\begin{equation}\label{eq:C-lambda}
 w^p-w
 =\lambda^{1/q_0}x^{P+1}-\lambda x^{q_0+1}
 =xR_\lambda(x),
\end{equation}
where
\[
 R_\lambda(x):=\lambda^{1/q_0}x^P-\lambda x^{q_0}
\]
is $p$-linearized.  After extending constants to $\F_{q^2}$, the
hypotheses of \cite[Theorem~13.7]{vGvV} apply, because
$\deg R_\lambda=p^{s+1}$ and $s+1\le(4s+2)/2$.  Hence every
$\Jac(C_{[\lambda:0]})$ is geometrically supersingular.  The elementary
abelian factorization of $\Jac(\cY_{p,s})$ proves the first assertion.
Its dimension is $(q-1)P/2$, so its slope-$1/2$ contribution has
multiplicity $(q-1)P$.  Formula \eqref{eq:NP-decomp} now follows from
Theorem~\ref{thm:decomp}.
\end{proof}

\begin{remark}
Proposition~\ref{prop:Y-ss} separates the old supersingular part from
the new arithmetic.  In particular, for $p>3$ the failure of
supersingularity of $\cR_{p,s}$ is entirely caused by
$\Jac(\cZ_{p,s})$.
\end{remark}

\section{Point counts and the first Newton slope}\label{sec:NP}

For $n\ge1$ put $K_n:=\F_{q^n}$ and
\[
 M_n:=\#\left\{x\in K_n:
 \Tr_{K_n/\F_q}\bigl(x^{2q_0}(x^q-x)\bigr)=0\right\}.
\]
Since $Z^q-Z=a$ has $q$ solutions in $K_n$ precisely when
$\Tr_{K_n/\F_q}(a)=0$,
\begin{equation}\label{eq:Z-Mn}
 \#\cZ_{p,s}(K_n)=1+qM_n.
\end{equation}

\begin{proposition}\label{prop:three-counts}
For every odd $p$,
\begin{align}
 \#\cZ_{p,s}(\F_q)&=q^2+1,\label{eq:count1}\\
 \#\cZ_{p,s}(\F_{q^2})&=2q^2-q+1.\label{eq:count2}
\end{align}
If $p>3$, then
\begin{equation}\label{eq:count3}
 \#\cZ_{p,s}(\F_{q^3})=
 \begin{cases}
 q^3-q^2+q+1,&p\equiv1\pmod3,\\[1mm]
 q^3+q^2-q+1,&p\equiv2\pmod3.
 \end{cases}
\end{equation}
\end{proposition}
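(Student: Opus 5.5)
The plan is to compute $M_n$ directly from \eqref{eq:Z-Mn} for $n=1,2,3$. We write $a(x):=x^{2q_0}(x^q-x)$ and, for $x\in K_n$, $x_i:=x^{q^{i}}$ with indices mod $n$. Then
\[
 \Tr_{K_n/\F_q}(a(x))=\sum_{i}x_i^{2q_0}(x_{i+1}-x_i).
\]
The guiding principle is to factor this expression as far as possible, then count the zeros.

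For $n=1$ the trace is $a(x)$ itself, which vanishes for every $x\in\F_q$. Hence $M_1=q$, and \eqref{eq:Z-Mn} gives \eqref{eq:count1}. For $n=2$ the trace is
\[
 (x^q-x)\bigl(x^{2q_0}-x^{2q_0q}\bigr)
 =-(x^q-x)\bigl(x^{qq_0}-x^{q_0}\bigr)\bigl(x^{qq_0}+x^{q_0}\bigr)
 =-(x^q-x)^{q_0+1}\bigl(x^{qq_0}+x^{q_0}\bigr).
\]
Here we use that $2q_0$ is not a $p$-power, so we factor $x^{2q_0}-x^{2q_0q}$ as a difference of squares rather than as a $2q_0$-th power. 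The trace is zero exactly when $x\in\F_q$ or $(x^{q-1})^{q_0}=-1$. Since $q_0$-th powering is a bijection fixing $-1$, the second condition is $x^{q-1}=-1$. That equation has exactly $q-1$ roots in $\F_{q^2}$, all outside $\F_q$. So $M_2=q+(q-1)=2q-1$, which gives $1+q(2q-1)=2q^2-q+1$, i.e.\ \eqref{eq:count2}.

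For $n=3$ the target is $M_3=q^2-q+1$ when $p\equiv1\pmod 3$ and $M_3=q^2+q-1$ when $p\equiv2\pmod3$. Since $q=p^{2s+1}$ is an odd power of $p$, these cases are exactly $q\equiv1$ and $q\equiv2\pmod 3$. I expect the answer to reflect whether $\F_q$ contains the cube roots of unity, i.e.\ whether a certain cubic splits over $\F_q$. I would set $S:=\sum_{i=1}^3x_i^{2q_0}(x_{i+1}-x_i)\in\F_q$ and use that $S=0$ iff $S^{P}=0$; recall $P=pq_0$, $P$-th powering is bijective on $\F_q$, and $x_i^{2q_0P}=x_{i+1}^2$. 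The aim is to rewrite the condition $S=0$ as the vanishing of a form in the ``difference'' coordinates $\Delta_i:=x_{i+1}-x_i$ (with $\sum\Delta_i=0$) together with $x_1^{q_0}$, modulo the $q$ solutions $x\in\F_q$. The hoped-for shape is a product such as $\prod_{\zeta^3=1}(\Delta_1+\zeta\Delta_2)^{(\cdot)}$, or a norm form from $\F_{q^3}$ or $\F_{q^2}$. Counting points on such a form reduces to counting elements of $\F_{q^3}$ with prescribed norm-type conditions, and those counts differ exactly by the sign pattern $\mp q^2\pm q$ appearing in \eqref{eq:count3}.

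If the direct factorization resists, my fallback is the character-sum expression
\[
 M_3=q^2+\frac1q\sum_{t\in\F_q^\times}\sum_{x\in K_3}\psi\bigl(\Tr_{K_3/\F_p}(t\,a(x))\bigr).
\]
I would use the $\F_q^\times$-scaling $x\mapsto\alpha x$, which sends $a\mapsto\alpha^{2q_0+1}a$, to reduce to a single exponential sum. I would then try to evaluate that sum by ``squaring'' (Weil differencing in $x\mapsto x+b$, $b\in K_3$), which turns the cubic-in-Frobenius-twists phase into a linear one whose kernel is an explicit $\F_q$-subspace. The main obstacle is this $n=3$ step. The phase $x^{2q_0}x^q$ is genuinely cubic over $\F_p$ (it is a product of three Frobenius twists of $x$), so standard quadratic-form techniques do not apply. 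Everything hinges on finding the right change of variables that exposes the dependence on $q\bmod 3$, and that is where I expect most of the work to go.
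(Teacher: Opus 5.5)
Your arguments for $n=1$ and $n=2$ are correct and agree with the paper's: your factor $x^{qq_0}+x^{q_0}$ is $(x^q+x)^{q_0}$, which is the paper's $(2x+d)^{q_0}$. The gap is $n=3$, which is the only substantial part of the proposition and the only part that uses $p>3$. There you list candidate mechanisms (a product over cube roots of unity, a norm form, a Weil-differenced character sum) but carry none of them out, so \eqref{eq:count3} is not proved. You were in fact close. Your $S^P=\sum_i x_{i+1}^2\Delta_i^P$ equals $\Tr_{K_3/\F_q}(x^2u^P)$ with $u:=x-x^{q^2}$. This is exactly the form $B$ used in the paper, which reaches it via the substitution $x=y^P$.

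The missing idea is that this form is affine-linear along the fibres of $x\mapsto(\Delta_i)$, i.e.\ along the cosets $x+\F_q$. You can see this directly. For $c\in\F_q$ one has $(x+c)^q-(x+c)=x^q-x$, so
\[
 \Tr_{K_3/\F_q}\bigl(a(x+c)\bigr)=\Tr_{K_3/\F_q}\bigl(a(x)\bigr)+2c^{q_0}\Tr_{K_3/\F_q}\bigl(x^{q_0}(x^q-x)\bigr),
\]
and the $c^{2q_0}$-term drops out because $\Tr_{K_3/\F_q}(x^q-x)=0$. So your differencing idea does work, but with shifts in $\F_q$ only and without characters. Consequently each coset with nonzero linear coefficient contains exactly one solution, and the coset $\F_q$ contributes $q$. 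Everything then reduces to locating the degenerate cosets other than $\F_q$ and evaluating the trace on them; this step is absent from your proposal.

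The paper does this in the coordinates $\sigma(v)=v^q$, $x=y^P$, $u=y-\sigma^{-1}(y)$ (which runs over $\ker\Tr_{K_3/\F_q}$), and $t=\sigma(u)/u$. The condition $\Tr(u)=0$ gives $1+t+t\sigma(t)=0$. Choosing $y=(u-\sigma(u))/3$, which is where $p\ne3$ is needed, the linear coefficient is $2A(u)$ with $A(u)=u^{P+1}\bigl(1+t^P(1+t)\bigr)$. Then $A(u)=0$ forces $t^P=-1/(1+t)=\sigma^2(t)$, hence $t\in\F_{p^{\gcd(6s+3,3s+1)}}=\F_p$ and $t^2+t+1=0$. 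So degenerate cosets exist only when $p\equiv1\pmod{3}$. In that case, by Hilbert~90, they are the two $\F_q$-lines $\sigma(u)=\omega^{\pm1}u$, with $\omega\in\F_p$ a primitive cube root of unity. On these lines the trace is the nonzero constant $\frac{(1-t)^2}{3}u^{P+2}$, so they contribute no solutions. This yields $M_3=q+(q^2-1)$ for $p\equiv2\pmod{3}$ and $M_3=q+(q-1)^2$ for $p\equiv1\pmod{3}$.

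Your hunch that cube roots of unity in $\F_q$ decide the answer is right. However, they enter through these degenerate fibres and the $\gcd$ argument, not through a factorization of the trace form.
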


\begin{proof}
For $n=1$, $x^q-x=0$ for every $x\in\F_q$, so $M_1=q$.

For $n=2$, put $d:=x^q-x$.  Then $d^q=-d$ and
\[
\begin{aligned}
 \Tr_{K_2/\F_q}(x^{2q_0}d)
 &=d\bigl(x^{2q_0}-(x+d)^{2q_0}\bigr)\\
 &=-d^{q_0+1}(2x+d)^{q_0}.
\end{aligned}
\]
Thus the trace vanishes exactly when $d=0$ or $2x+d=0$.  The first
condition contributes $q$ elements, and the second is $x^q+x=0$ and
contributes $q-1$ additional nonzero elements.  Hence $M_2=2q-1$.

Assume now $p>3$ and put $K:=K_3$.  Let
\[
 \sigma(u):=u^q.
\]
Since the map $y\mapsto y^P$ is an automorphism of $K$, write
$x=y^P$.  Using $Pq_0=q$ we obtain
\[
 x^{2q_0}(x^q-x)
 =\sigma(y)^2(\sigma(y)-y)^P.
\]
Set
\[
 u:=y-\sigma^{-1}(y).
\]
Then $\sigma(y)-y=\sigma(u)$, and invariance of the relative trace
gives
\begin{equation}\label{eq:B-def}
 \Tr_{K/\F_q}\bigl(x^{2q_0}(x^q-x)\bigr)
 =B(y):=\Tr_{K/\F_q}(y^2u^P).
\end{equation}
The map $y\mapsto u$ has kernel $\F_q$ and image
$V:=\ker\Tr_{K/\F_q}$, with $|V|=q^2$.

Fix $u\in V$.  Its fibre consists of $y+c$, $c\in\F_q$.  Put
\[
 A(u):=\Tr_{K/\F_q}(yu^P).
\]
Since $\Tr(u^P)=0$,
\begin{equation}\label{eq:B-fibre}
 B(y+c)=B(y)+2cA(u).
\end{equation}
Hence if $A(u)\ne0$, exactly one point in the fibre satisfies $B=0$.

Suppose $u\ne0$ and set $t:=\sigma(u)/u$.  The equality
$\Tr(u)=0$ gives
\begin{equation}\label{eq:t-relation}
 1+t+t\sigma(t)=0.
\end{equation}
Because $p\ne3$, the element $y=(u-\sigma(u))/3$ satisfies
$y-\sigma^{-1}(y)=u$.  A direct calculation using
\eqref{eq:t-relation} yields
\begin{equation}\label{eq:Aformula}
 A(u)=u^{P+1}\bigl(1+t^P(1+t)\bigr).
\end{equation}
If $A(u)=0$, then
\[
 t^P=-\frac1{1+t}=\sigma^2(t).
\]
Since $K=\F_{p^{6s+3}}$, this implies
\[
 t\in\F_{p^{\gcd(6s+3,3s+1)}}=\F_p.
\]
Thus \eqref{eq:t-relation} becomes
\begin{equation}\label{eq:t-cubic}
 t^2+t+1=0.
\end{equation}
If $p\equiv2\pmod3$, there are no nonzero exceptional fibres, and
$M_3=(q^2-1)+q=q^2+q-1$.

If $p\equiv1\pmod3$, let $\omega\in\F_p$ be a primitive cube root
of unity.  By Hilbert's Theorem~90, the equations
$\sigma(u)=\omega u$ and $\sigma(u)=\omega^2u$ each have a nonzero
solution because the relative norms of $\omega$ and $\omega^2$ from
$K/\F_q$ are $1$; their nonzero solution sets are one-dimensional
$\F_q$-spaces.  Since $1+\omega+\omega^2=0$, they lie in
$V=\ker\Tr_{K/\F_q}$.  Thus the exceptional nonzero $u$ form precisely
these two $\F_q$-lines.  For such $u$, substituting
$y=u(1-t)/3$ in \eqref{eq:B-def} gives
\[
 B(y)=\frac{(1-t)^2}{3}u^{P+2}\ne0.
\]
Hence the $2(q-1)$ nonzero exceptional fibres contribute no solution;
the $(q-1)^2$ generic fibres contribute one solution each, and the
fibre $u=0$ contributes $q$.  Thus $M_3=q^2-q+1$.
Formula \eqref{eq:count3} follows from \eqref{eq:Z-Mn}.
\end{proof}

Write
\[
 L_{\cZ}(T)=\prod_j(1-\alpha_jT)
 =1+c_1T+c_2T^2+c_3T^3+\cdots
\]
and
\[
 S_n:=\sum_j\alpha_j^n
 =q^n+1-\#\cZ_{p,s}(\F_{q^n}).
\]
Put $A:=q(q-1)$.  Proposition~\ref{prop:three-counts} gives
\[
 S_1=S_2=-A,
 \qquad
 S_3=\varepsilon A,
\]
where
\[
 \varepsilon:=
 \begin{cases}
 1,&p\equiv1\pmod3,\\
 -1,&p\equiv2\pmod3.
 \end{cases}
\]
Newton's identities give
\begin{equation}\label{eq:c123}
 c_1=A,\qquad
 c_2=\frac{A(A+1)}2,\qquad
 c_3=\frac{A(A^2+3A-2\varepsilon)}6.
\end{equation}

\begin{theorem}\label{thm:slope-bracket}
Let $p>3$.  Then
\[
 \frac1{P+2}
 \le\lambda_{\min}(\cZ_{p,s})
 \le\frac13.
\]
\end{theorem}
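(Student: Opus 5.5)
For the lower bound, I would use the elementary abelian factorization \eqref{eq:EA-factor} applied to $\cZ_{p,s}$. By Proposition~\ref{prop:degree-p}, the degree-$p$ quotients of $\cZ_{p,s}$ are the curves $C_{[0:\mu]}$. Their reduced Artin--Schreier polynomials are $F_{0,\mu}(x)=\mu^{1/q_0}x^{P+2}-\mu x^{2q_0+1}$, each of degree $P+2$, and $p\nmid P+2$ because $p$ is odd. Hence the multiset of Frobenius eigenvalues of $\cZ_{p,s}$ is the union of the eigenvalues of these one-pole curves. The Newton-over-Hodge consequence of Zhu's theorem recalled in Section~\ref{sec:prelim} says that every eigenvalue $\alpha$ of such a curve satisfies $v_q(\alpha)\ge 1/(P+2)$. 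Taking the minimum over all factors gives $\lambda_{\min}(\cZ_{p,s})\ge 1/(P+2)$.

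For the upper bound, I would read it off from the coefficient $c_3$ in \eqref{eq:c123}. The Newton polygon starts at $(0,0)$, and its first segment has slope $\lambda_{\min}$. By convexity, $v_q(c_i)\ge i\lambda_{\min}$ for every $i$. So it suffices to show $v_q(c_3)\le 1$, since then $3\lambda_{\min}\le v_q(c_3)\le 1$.

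The computation of $v_q(c_3)$ goes as follows.
\begin{itemize}
\item We have $A=q(q-1)$, so $v_q(A)=1$.
\item Since $p>3$, both $2$ and $3$ are $p$-adic units, so the factor $1/6$ does not affect the valuation.
\item In $A^2+3A-2\varepsilon$, the terms $A^2$ and $3A$ are divisible by $q$, while $-2\varepsilon=\mp2$ is a unit. So this factor is a $p$-adic unit.
\end{itemize}
Hence $v_q(c_3)=v_q(A)=1$, and $\lambda_{\min}\le 1/3$.

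Using $c_1$ or $c_2$ alone would only give the trivial bound $\lambda_{\min}\le 1/2$ ($v_q(c_1)=1$, $v_q(c_2)=1$). This is why the third point count, with its sign $\varepsilon$, is needed: the unit $-2\varepsilon$ is exactly what prevents extra $p$-divisibility in $c_3$.

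Finally, the consequences. Since $1/3<1/2$, $\cZ_{p,s}$ is not supersingular. By \eqref{eq:NP-decomp}, $q\,\NP(\cZ_{p,s})\subseteq \NP(\cR_{p,s})$, so $\cR_{p,s}$ is not supersingular either. By the contrapositive of \eqref{eq:maxmin-ss}, $\cR_{p,s}$ is neither maximal nor minimal over any finite extension of $\F_q$.

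The only step needing care is the Zhu input. One must check that his Hodge bound applies to each nontrivial character of each degree-$p$ factor, with degree exactly $P+2$ prime to $p$. This holds because the reduced representative has leading coefficient $\mu^{1/q_0}\ne0$, and the Hodge polygon's first slope is $1/d$. The upper bound is routine given Proposition~\ref{prop:three-counts}.
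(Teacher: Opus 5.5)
Your proposal is correct and follows the paper's proof essentially verbatim. The lower bound comes from Zhu's Hodge bound applied to each degree-$p$ quotient $C_{[0:\mu]}$ (reduced degree $P+2$, prime to $p$) combined with the elementary abelian factorization, and the upper bound comes from $v_q(c_3)=1$ via \eqref{eq:c123}, so that the Newton polygon has ordinate at most $1$ at abscissa $3$. One harmless slip is in your side remark: $v_q(c_1)=1$ by itself only gives $\lambda_{\min}\le 1$, not $1/2$.
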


\begin{proof}
For the upper bound, normalize the valuation by $v_q(q)=1$.  Since
$p>3$, both $2$ and $6$ are $p$-adic units.  Furthermore,
\[
 v_q(A)=1,\qquad A+1\equiv1\pmod p,
\]
and
\[
 A^2+3A-2\varepsilon\equiv-2\varepsilon\not\equiv0\pmod p.
\]
Hence
\[
 v_q(c_1)=v_q(c_2)=v_q(c_3)=1.
\]
The Newton polygon has ordinate at most $1$ at abscissa $3$, so the
sum of the first three slopes is at most $1$.  In particular
$\lambda_{\min}\le1/3$.

For the lower bound, the degree-$p$ quotients of $\cZ_{p,s}$ are the
curves $C_{[0:\mu]}$ with reduced polynomial of degree $P+2$.
Zhu's Newton-over-Hodge theorem \cite{Zhu} gives first slope at least
$1/(P+2)$ for every such quotient.  The elementary abelian
factorization of $\Jac(\cZ_{p,s})$ gives the same lower bound for
$\cZ_{p,s}$.
\end{proof}

\begin{corollary}\label{cor:nonss}
For $p>3$, the curves $\cZ_{p,s}$ and $\cR_{p,s}$ are not
supersingular.  Moreover, $\cR_{p,s}$ is neither maximal nor minimal
over any finite extension of $\F_q$.
\end{corollary}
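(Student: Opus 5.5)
The plan is to combine Theorem~\ref{thm:slope-bracket} with the Jacobian decomposition of Theorem~\ref{thm:decomp}, and then apply the contrapositive of \eqref{eq:maxmin-ss}.

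\emph{Step 1: $\cZ_{p,s}$ is not supersingular.} Theorem~\ref{thm:slope-bracket} gives $\lambda_{\min}(\cZ_{p,s})\le 1/3<1/2$. A supersingular curve has all Newton slopes equal to $1/2$. Since $g(\cZ_{p,s})=(q-1)(P+1)/2>0$ by \eqref{eq:gYgZ}, the Newton polygon is nonempty, so a slope below $1/2$ genuinely occurs and $\cZ_{p,s}$ is not supersingular.

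\emph{Step 2: $\cR_{p,s}$ is not supersingular.} By \eqref{eq:NP-decomp}, the multiset of slopes of $\cR_{p,s}$ contains $q$ copies of the slopes of $\cZ_{p,s}$. In particular it contains a slope at most $1/3$, so $\cR_{p,s}$ is not supersingular. Equivalently, the isogeny \eqref{eq:J-decomp} over $\F_q$ exhibits $\Jac(\cZ_{p,s})$ as an isogeny factor of $\Jac(\cR_{p,s})$. Supersingularity passes to isogeny factors and is a geometric property, so the conclusion holds over every finite extension of $\F_q$.

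\emph{Step 3: $\cR_{p,s}$ is neither maximal nor minimal over any $\F_{q^n}$.} Suppose $\cR_{p,s}$ were maximal or minimal over some $\F_{q^n}$. By \eqref{eq:maxmin-ss}, $\Jac(\cR_{p,s})$ would be supersingular, contradicting Step~2. One point deserves care here: the Newton polygon over $\F_{q^n}$ uses the normalized valuation $v_{q^n}$ and Frobenius eigenvalues $\alpha_j^n$. Under this normalization the slopes are unchanged, because $v_{q^n}(\alpha_j^n)=v_q(\alpha_j)$. So a slope of at most $1/3$ persists after base change, and the argument of \eqref{eq:maxmin-ss}, which forces every $\alpha_j^n/q^{n/2}$ to be a root of unity and hence every slope to equal $1/2$, yields the contradiction.

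The only mildly delicate point is this invariance of slopes under extension of constants. Everything else is direct bookkeeping from the earlier results, so I do not expect a real obstacle.
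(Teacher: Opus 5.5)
Your proposal is correct and follows the paper's proof essentially verbatim: Theorem~\ref{thm:slope-bracket} gives $\Jac(\cZ_{p,s})$ a slope strictly below $1/2$, Theorem~\ref{thm:decomp} makes $\Jac(\cZ_{p,s})$ an isogeny factor of $\Jac(\cR_{p,s})$, and the contrapositive of \eqref{eq:maxmin-ss} then excludes maximality and minimality over every finite extension. Your added remark that slopes do not change under extension of constants, since $v_{q^n}(\alpha_j^n)=v_q(\alpha_j)$, is a correct detail that the paper leaves implicit.
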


\begin{proof}
Theorem~\ref{thm:slope-bracket} gives a slope strictly below $1/2$ in
$\Jac(\cZ_{p,s})$, and Theorem~\ref{thm:decomp} makes this an isogeny
factor of $\Jac(\cR_{p,s})$.  By \eqref{eq:maxmin-ss}, a curve that is maximal or minimal over a
finite extension has supersingular Jacobian.  Hence the slope below
$1/2$ obtained above rules out both possibilities.
\end{proof}

\begin{remark}\label{rem:p3}
For $p=3$, $\cR_{3,s}$ is the classical Ree curve.  Its Frobenius
polynomial is a product of powers of
\[
 T^2+q
 \qquad\text{and}\qquad
 T^2+3q_0T+q;
\]
see \cite{TT}.  Hence it is supersingular.  Thus, among the odd
characteristics in the present family, characteristic $3$ is exactly
the supersingular case.
\end{remark}

\section{The full geometric automorphism group}\label{sec:aut}

Assume throughout this section that $p>3$, and set
\[
 X:=\cR_{p,s}\times_{\F_q}\overline{\F}_q.
\]
We use the group $U$ defined in \eqref{eq:Gamma-size}.

\begin{lemma}\label{lem:big}
The pair $(X,U)$ is a big action.  In fact,
\begin{equation}\label{eq:U-over-g}
 \frac{|U|}{g_X}>q_0.
\end{equation}
\end{lemma}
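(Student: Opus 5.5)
This is a direct computation from the genus formula \eqref{eq:genus-R}. Since the genus is invariant under extension of constants, $g_X=g(\cR_{p,s})$. By Corollary~\ref{cor:genus-points},
\[
 g_X=\frac{q-1}{2}\bigl[P+q(P+1)\bigr].
\]
By \eqref{eq:Gamma-size}, $|U|=q^3$. Also, $U$ is a $p$-group of automorphisms of $X$ because $q^3$ is a power of $p$. The argument therefore has two steps. First we prove the numerical inequality $q^3>q_0\,g_X$. Then we deduce the big-action inequality $|U|/g_X>2p/(p-1)$.

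For the first step, bound the genus crudely:
\[
 g_X<\frac{q}{2}\bigl[P+q(P+1)\bigr]\le\frac q2\cdot q(P+2),
\]
using $P\le q$. It then suffices to show $q^2>q_0 q(P+2)/2$, that is,
\[
 2q>q_0(P+2)=q_0P+2q_0=q+2q_0.
\]
This is equivalent to $q>2q_0$, which holds since $q=pq_0^2\ge 3q_0$. So $|U|/g_X>q_0$.

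For the second step, note that $q_0=p^s\ge p\ge 3$, while $2p/(p-1)\le 3$ for every odd $p$, with equality only at $p=3$. Hence
\[
 \frac{|U|}{g_X}>q_0\ge 3\ge \frac{2p}{p-1}.
\]
The strict inequality comes from the first step, so $(X,U)$ is a big action. We also need $g_X\ge2$, which is clear from the formula.

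The only point needing care is to keep the first estimate strict while using the cruder bounds $q-1<q$ and $P\le q$. Both inequalities are strict or harmless in the direction used, so no real obstacle arises.
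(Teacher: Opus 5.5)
Your proof is correct and takes essentially the paper's route: both derive $q^3>q_0g_X$ directly from the genus formula \eqref{eq:genus-R} and then compare $q_0$ with $2p/(p-1)$. The paper uses the exact identity $2q^3-q_0(q-1)[P+q(P+1)]=pq_0^2(p^2q_0^4-pq_0^3+q_0+1)$ with $q_0\ge p\ge5$ under its standing assumption $p>3$, while your cruder bounds $q-1<q$ and $P\le q$ reduce to $q>2q_0$, and your comparison $q_0\ge3\ge 2p/(p-1)$, with strictness from the first step, also covers $p=3$.
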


\begin{proof}
Using $q=pq_0^2$ and $P=pq_0$, a direct calculation gives
\[
\begin{aligned}
 2q^3-q_0(q-1)\bigl[P+q(P+1)\bigr]
 =p q_0^2\bigl(p^2q_0^4-pq_0^3+q_0+1\bigr)>0.
\end{aligned}
\]
Together with \eqref{eq:genus-R}, this is exactly
\eqref{eq:U-over-g}.  Since $q_0\ge p\ge5$,
\[
 \frac{|U|}{g_X}>q_0>\frac{2p}{p-1},
\]
so $(X,U)$ is a big action.
\end{proof}

\begin{remark}\label{rem:GK-large-p}
The action also lies in the stronger ``large $p$-group'' range of
Giulietti--Korchm\'aros \cite[Theorem~1.1]{GKp}.  Indeed,
\[
 |U|>q_0g_X\ge pg_X>
 \frac{p^2}{p^2-p-1}(g_X-1).
\]
Since $X$ has $p$-rank zero, their theorem places this action in the
case of a unique totally ramified place and no other ramification,
exactly as in the explicit ramification calculation of
Proposition~\ref{prop:ramification}.
\end{remark}

\begin{lemma}\label{lem:derived}
Put
\[
 N:=\{u(0,c,d):c,d\in\F_q\}.
\]
Then
\[
 D(U)=N,
\]
so $|N|=q^2$ and $U/N\cong(\F_q,+)$ acts on $x$ by translations.
Moreover,
\[
 X/N\cong\mathbb P^1_x.
\]
\end{lemma}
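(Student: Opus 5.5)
We compute the commutator subgroup $D(U)$ directly from the multiplication law \eqref{eq:U-law}, then identify the quotient. The set $N$ is exactly the group $V$ of translations $\tau_{c,d}$, since $u(0,c,d)$ sends $(x,y,z)\mapsto(x,y+c,z+d)$. By \eqref{eq:U-law}, the projection $u(b,c,d)\mapsto b$ is a homomorphism $U\to(\F_q,+)$. Its kernel is $N$, and it is surjective. Hence $N$ is normal, $U/N\cong(\F_q,+)$, and this quotient is abelian, so $D(U)\subseteq N$. Since $\psi_{1,b,c,d}(x)=x+b$, the quotient $U/N$ acts on $x$ by the translations $x\mapsto x+b$.

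For the reverse inclusion we produce enough commutators. Using \eqref{eq:U-law}:
\begin{itemize}
\item The conjugation formula from the proof of Theorem~\ref{thm:decomp}, $u_bv_{c,d}u_b^{-1}=v_{c,d-2b^{q_0}c}$, gives commutators in $N$ of the form $v_{0,\pm2b^{q_0}c}$. Since $p$ is odd and $b\mapsto b^{q_0}$ is bijective, these already yield all of $Z=\{v_{0,d}\}$.
\item Next, compute $[u_b,u_{b'}]$ for $u_b=u(b,0,0)$. The $y$-coordinate of $u_bu_{b'}$ is $b'^{q_0}b$, so modulo $Z$ the commutator is $v_{\,b'^{q_0}b-b^{q_0}b',\,*}$. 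It remains to see that the elements $b'^{q_0}b-b^{q_0}b'$ span $\F_q$ additively.
\end{itemize}
Take $b'=1$; this gives $b-b^{q_0}$. The map $b\mapsto b-b^{q_0}$ is $\F_p$-linear with kernel $\F_{q_0}\cap\F_q$. Because $\gcd(s,2s+1)=1$, that kernel is $\F_p$, so the image has index $p$ in $\F_q$. Additional choices of $b'$ give further values. For instance, for $c\in\F_p^\times$ one has $b'^{q_0}b-b^{q_0}b'=c(b-b^{q_0})$ only when $b'=c$. For non-prime-field $b'$, a dimension count should show that the images $\{b'^{q_0}b-b^{q_0}b'\}$ are not all contained in a single hyperplane. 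A cleaner route is to note that, for fixed $b'\notin\F_p$, the image hyperplanes vary with $b'$: each is the kernel of a trace form $\Tr(\gamma\,\cdot)$ with $\gamma$ depending on $b'$. Two distinct hyperplanes together span $\F_q$. Since $D(U)$ is a subgroup containing $Z$ and these elements, we get $D(U)=N$, and $|N|=q^2$.

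Finally, $X/N$ is the quotient of $X$ by the Galois group $V=N$ of $L/\F_q(x)$ from Proposition~\ref{prop:degree-p}, base-changed to $\overline{\F}_q$. Hence its function field is $\overline{\F}_q(x)$ and $X/N\cong\mathbb P^1_x$.

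The main obstacle is the spanning step showing that the values $b'^{q_0}b-b^{q_0}b'$ generate $(\F_q,+)$. I would settle it by the explicit trace-dual computation: determine for which $\gamma$ the form $\Tr(\gamma(b'^{q_0}b-b^{q_0}b'))$ vanishes for all $b$, and show that this set of $\gamma$ depends genuinely on $b'$.
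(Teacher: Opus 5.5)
Your reduction is correct and follows the paper's proof. The projection $u(b,c,d)\mapsto b$ gives $D(U)\subseteq N$, and the commutators of $u_b$ with $v_{c,d}$ give all of $Z$. Modulo the central subgroup $Z$, the commutator of $u_b$ and $u_{b'}$ has $y$-coordinate $\pm(b'^{q_0}b-b^{q_0}b')$. Everything therefore hinges on these values spanning $\F_q$ over $\F_p$, and that is exactly the step you do not carry out. You correctly show that $b'=1$ gives an index-$p$ subspace $H_1=\{b-b^{q_0}\}$. The rest does not close the argument:
\begin{itemize}
\item The remark about $b'\in\F_p^\times$ adds nothing, since those $b'$ just reproduce $H_1$.
\item ``A dimension count should show'' is not an argument.
\item The assertion that the hyperplanes $H_{b'}$ ``vary with $b'$'' is stated without proof and only promised as a later computation.
\end{itemize}
As written, the one nontrivial point of the lemma is missing.

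The promised computation is short, and it is precisely the paper's argument. For the trace form, the adjoint of $b\mapsto b^{q_0}$ is $b\mapsto b^{P}$, because $q_0P=q$. Hence
\[
 \Tr\bigl(b'^{q_0}b-b^{q_0}b'\bigr)=\Tr\bigl(b\,(b'^{q_0}-b'^{P})\bigr).
\]
\begin{itemize}
\item Taking $b'=1$ shows $H_1\subseteq\ker\Tr$, so $H_1=\ker\Tr$ by counting.
\item If $b'\notin\F_p$, then $b'^{q_0}\ne b'^{P}=(b'^{p})^{q_0}$. By nondegeneracy of the trace form, some $b\in\F_q$ gives a commutator $y$-coordinate of nonzero trace, i.e.\ one lying outside $H_1$.
\item Such a $b'$ exists because $\F_q\ne\F_p$, and a hyperplane together with one vector outside it spans $\F_q$.
\end{itemize}
The paper phrases the same computation dually. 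An $a$ orthogonal to all the values satisfies $ab'^{q_0}=a^{P}b'^{P}$ for every $b'$. Taking $b'=1$ and then any $b'\notin\F_p$ forces $a=0$. With this inserted, your proof is complete, and your identification $X/N\cong\mathbb P^1_x$ via $N=V$ is fine as stated.
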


\begin{proof}
Formula \eqref{eq:U-law} shows that $U/N$ is abelian, hence
$D(U)\subseteq N$.  Commuting an $x$-translation with a
$y$-translation gives all $z$-translations.  The $y$-coordinates of
commutators of two $x$-translations span the $\F_p$-space generated
by
\[
 b^{q_0}b'-{b'}^{q_0}b.
\]
We claim that this span is all of $\F_q$.  If $a\in\F_q$ is
orthogonal to it for the trace pairing, then the adjoint of the
$p^s$-Frobenius gives, for every $b\in\F_q$,
\[
 a b^{p^s}=a^{p^{s+1}}b^{p^{s+1}}.
\]
Taking $b=1$ gives $a=a^{p^{s+1}}$.  If $a\ne0$, varying $b$ would
force $b^{p^s}=b^{p^{s+1}}$ for every $b\in\F_q$, impossible because
$\F_q\ne\F_p$.  Thus $a=0$.  Hence all $y$-translations, and already
all $z$-translations, lie in $D(U)$.  Therefore $D(U)=N$.  The fixed
field of $N$ is $\overline{\F}_q(x)$, which proves the last assertion.
\end{proof}

Let
\[
 G:=\Aut_{\overline{\F}_q}(X).
\]
Since $U$ fixes $P_\infty$, we have
$U\le G_{P_\infty}^{(1)}$.  Lemma~\ref{lem:big} therefore gives
\[
 |G_{P_\infty}^{(1)}|\ge |U|>q_0g_X>
 \frac{p}{p-1}g_X.
\]
Theorem~1.3 of Giulietti--Korchm\'aros \cite{GK} now implies that
either $G$ fixes $P_\infty$, or $X$ belongs to one of their four
exceptional families.  The Suzuki and Ree cases are excluded because
$p>3$.  The Hermitian case is excluded because a Hermitian curve is
supersingular, whereas $X$ is not supersingular by
Corollary~\ref{cor:nonss}.  In the remaining odd hyperelliptic case the
hyperelliptic involution is central and has order prime to $p$, so the
nonabelian group $U$ would inject into the automorphism group of the
rational quotient.  This is impossible because every finite
$p$-subgroup of $\operatorname{PGL}_2$ in characteristic $p$ is
elementary abelian; see, for instance, \cite{Faber}.  Hence
\begin{equation}\label{eq:G-fixes-infty}
 G=G_{P_\infty}.
\end{equation}

Put
\[
 A_1:=G_{P_\infty}^{(1)}.
\]
Now Corollary~2.11 of Matignon--Rocher \cite{MR} applies to the big
action $(X,U)$ and its full wild inertia group $A_1$, giving
\begin{equation}\label{eq:A1-derived}
 D(A_1)=D(U)=N.
\end{equation}

By \eqref{eq:A1-derived}, the group $A_1/N$ acts on
$X/N\cong\mathbb P^1_x$ by translations.  Thus
\begin{equation}\label{eq:E-space}
 A_1/N=\{x\mapsto x+\beta:\beta\in E\}
\end{equation}
for a finite $\F_p$-vector space
$E\subset\overline{\F}_q$ containing $\F_q$.

\begin{lemma}\label{lem:E=Fq}
One has $E=\F_q$.  Consequently $A_1=U$.
\end{lemma}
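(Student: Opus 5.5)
We will show that any $\alpha\in A_1$ lifting $x\mapsto x+\beta$ forces $\beta\in\F_q$, by showing that such an $\alpha$ must act on $y$ and $z$ in the triangular shape of \eqref{eq:psi}. Since $N$ is normal in $A_1$, $\alpha$ normalizes $N$. Because $D(A_1)=N$ is characteristic and the fixed field of the subgroup $Z$ of $z$-translations is $\overline{\F}_q(x,y)$, we first show that $\alpha$ also normalizes $Z$. For this we use Proposition~\ref{prop:ramification}: $Z=V^{P+2}$ is the last nontrivial ramification group of $N=V$ at $P_\infty$. Conjugation by $\alpha$, which fixes $P_\infty$, preserves the lower ramification filtration of the normal subgroup $N$. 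Hence $\alpha$ preserves $Z$, and so it preserves the intermediate fields $\overline{\F}_q(x)\subset\overline{\F}_q(x,y)\subset L$.

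Next we use pole orders at $P_\infty$. We have $v(x)=-q^2$. From $y^q-y=x^{q_0}(x^q-x)$ we get $v(y)=-q(q+q_0)$, and similarly $v(z)=-q(q+2q_0)$. The Riemann--Roch spaces $\mathcal L(mP_\infty)$ are spanned by monomials $x^iy^jz^k$, using the Castle/semigroup structure; we would justify this by the fact that the pole orders $q^2$, $q^2+qq_0$ and $q^2+2qq_0$ together with the known genus give a basis. Then $\alpha(y)$ has the same pole order as $y$, so
\[
 \alpha(y)=ay+ex+f,
\]
with no $x^2$ term, because $2q^2>q^2+qq_0$. Similarly $\alpha(z)$ is a combination of $z,y,x,1$, possibly with an extra $x^2$-type term. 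We will check against the pole orders that $2q^2>q^2+2qq_0$ kills the $x^2$ term. Since $\alpha\in A_1$ lies in the wild inertia group, the leading coefficient is $a=1$.

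Now substitute into the first equation. With $x\mapsto x+\beta$ we get
\[
 y^q-y+(ex+f)^q-(ex+f)=(x+\beta)^{q_0}\bigl(x^q-x+\beta^q-\beta\bigr).
\]
Subtracting the original equation leaves a polynomial identity in $x$ alone, because $x$ is transcendental:
\[
 e^qx^q-ex+f^q-f=\beta^{q_0}(x^q-x)+x^{q_0}(\beta^q-\beta)+\beta^{q_0}(\beta^q-\beta).
\]
Compare coefficients. Since $q_0\notin\{0,1,q\}$, the coefficient of $x^{q_0}$ gives $\beta^q=\beta$, so $\beta\in\F_q$. This shows $E\subseteq\F_q$; the reverse inclusion is already given. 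Finally $|A_1|=|N|\cdot|E|=q^3=|U|$ and $U\le A_1$, so $A_1=U$.

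The main obstacle is the step that $\alpha(y)$ lies in $\overline{\F}_q(x,y)$ and is linear in $y$. If the argument through normalizing $Z$ via the ramification filtration proves delicate, we would try an alternative: compare pole orders directly in $\mathcal L\bigl((q^2+qq_0)P_\infty\bigr)$. Its dimension is $3$, spanned by $1$, $x$ and $y$, once we verify that no $z$-term or other function has pole order at most $q^2+qq_0$. This forces $\alpha(y)=ay+ex+f$ without needing $Z$ at all.
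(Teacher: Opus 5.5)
\textbf{There is a genuine gap in your Step 2.} Your Steps 1 and 3 are correct.
\begin{itemize}
\item Step 1: conjugation by $\alpha\in A_1$ preserves the lower filtration of $N=V$ at $P_\infty$. So $\alpha$ normalizes $Z=V_{P+2}$ and preserves $\overline{\F}_q(x,y)$. This is the paper's first step in another form, since $Z$ is the common kernel of the characters in $\mathcal W$.
\item Step 3: once $\alpha(y)=y+ex+f$ is known, the $x^{q_0}$-coefficient does give $\beta^q=\beta$.
\end{itemize}

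Step 2 is where the proof fails, as you yourself anticipate. You need $\mathcal L(mP_\infty)$ to be spanned by the monomials $x^iy^jz^k$ of pole order at most $m$, and in that form the claim is false.

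Put $w:=y^P-x^{P+1}$. Then $w^{q_0}=y^q-x^{q+q_0}=y-x^{q_0+1}$, so $w$ has pole order $q^2+qP$ at $P_\infty$. The only monomials in $\mathcal L((q^2+qP)P_\infty)$ are $1,x,y,z$. Their pole orders $0,q^2,q^2+qq_0,q^2+2qq_0$ are distinct and different from $q^2+qP$, so $w$ is not in their span. More generally, your assertion would make every nongap the pole order of a monomial, hence divisible by $qq_0$. That is absurd, since $H(P_\infty)$ has only $g$ gaps.

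The Castle property gives only $m(P_\infty)=q^2$ and symmetry; it says nothing about the next nongap. So the fact you actually need, $\mathcal L((q+q_0)Q_\infty)=\langle 1,x,y\rangle$ on $\cY_{p,s}$ (with $Q_\infty$ its point over $x=\infty$), is unproved. Your ``alternative'' merely restates it. It does not follow from the three pole orders and the genus, because cancellation of leading terms, as in $w$, reaches pole orders outside the monomial semigroup.

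\textbf{How to close the gap using Step 1.}
\begin{itemize}
\item Since $\alpha$ normalizes $V/Z=\{y\mapsto y+c\}$ acting on $\cY_{p,s}$, one has $\tau_c(\alpha(y))-\alpha(y)=\theta(c)\in\F_q$ for an additive bijection $\theta$ of $\F_q$.
\item Write $\theta(c)=\sum_{i=0}^{2s}\theta_ic^{p^i}$. Then $\alpha(y)-\sum_i\theta_iy^{p^i}$ is $V/Z$-invariant, lies in $\overline{\F}_q(x)$, and has no finite poles, because $\alpha$ fixes $Q_\infty$. So it is a polynomial in $x$.
\item For $0\le j\le s-1$, substitute $y^{p^{s+1+j}}=w^{p^j}+x^{(P+1)p^j}$.
\item On $\cY_{p,s}$ the terms $y^{p^i}$ ($0\le i\le s$), $w^{p^j}$ ($0\le j\le s-1$) and $x^r$ have pole orders $p^iq+p^{s+i}$, $p^jq+p^{s+1+j}$ and $rq$. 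These are pairwise distinct: compare quotient and remainder on division by $q$.
\item Hence pole order $q+q_0$ forces $\theta_i=0$ for $i\ge1$ and a polynomial part of degree at most one, i.e.\ $\alpha(y)=\theta_0y+ex+f$.
\end{itemize}

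With this repair your argument is a valid alternative to the paper's. The paper never leaves the base field. It uses conductors to show that conjugation stabilizes $\mathcal W$. It then deduces $[F_{\lambda,0}(x+\beta)]=[F_{\lambda,0}(x)]$ modulo $\wpmap(\overline{\F}_q(x))$ and extracts $\beta^q=\beta$ from the linear coefficient. Your route replaces that Artin--Schreier reduction with a pole-order computation on $\cY_{p,s}$, and in return makes the action of $A_1$ on $y$ explicit.
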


\begin{proof}
The degree-$p$ quotient characters of $N$ are represented by
\eqref{eq:F-lambda-mu}.  Their conductor exponents are $P+2$ for
$\mu=0$ and $P+3$ for $\mu\ne0$.  Hence
\[
 \mathcal W:=\{(\lambda,0):\lambda\in\F_q\}
\]
is intrinsically characterized as the character subspace of conductor
at most $P+2$.

Let $\beta\in E$, and choose $\sigma\in A_1$ inducing the
translation $x\mapsto x+\beta$ on $X/N$.  Since
$N=D(A_1)$ by \eqref{eq:A1-derived}, the subgroup $N$ is
characteristic in $A_1$.  Hence conjugation by $\sigma$ permutes the
degree-$p$ quotients of $X\to X/N$ and preserves their Artin
conductors, so it stabilizes the projective character subspace
corresponding to
\[
 \mathcal W:=\{(\lambda,0):\lambda\in\F_q\}.
\]
Accordingly, for each $\lambda\in\F_q^\times$ there exist
$\lambda'\in\F_q^\times$ and $a\in\F_p^\times$ such that, after
pullback by $\sigma$, the associated Artin--Schreier classes satisfy
\begin{equation}\label{eq:AS-line-beta}
 [F_{\lambda,0}(x+\beta)]
   =a[F_{\lambda',0}(x)]
 \qquad\text{in }\overline{\F}_q(x)/\wpmap(\overline{\F}_q(x)).
\end{equation}
Here
\[
 F_{\lambda,0}(x)
 =\lambda^{1/q_0}x^{P+1}-\lambda x^{q_0+1}.
\]
Translation does not change the coefficient
$\lambda^{1/q_0}$ of $x^{P+1}$, and Artin--Schreier reduction cannot
alter this term because $P+1$ is prime to $p$.  Comparing the leading
coefficients in \eqref{eq:AS-line-beta} gives
\[
 \lambda^{1/q_0}=a({\lambda'})^{1/q_0}.
\]
Since $a\in\F_p^\times$, raising to the $q_0$-th power yields
$\lambda=a\lambda'$.  Therefore
$aF_{\lambda',0}=F_{\lambda,0}$, and
\eqref{eq:AS-line-beta} becomes
\[
 [F_{\lambda,0}(x+\beta)]=[F_{\lambda,0}(x)].
\]
To make the reduction explicit, in characteristic $p$ we have
\[
 (x+\beta)^{P+1}-x^{P+1}
   =\beta x^P+\beta^P x+\beta^{P+1},
\]
and
\[
 (x+\beta)^{q_0+1}-x^{q_0+1}
   =\beta x^{q_0}+\beta^{q_0}x+\beta^{q_0+1}.
\]
Modulo $\wpmap(\overline{\F}_q(x))$, a monomial $c x^{p^r}$ reduces
to $c^{1/p^r}x$, while constants may be normalized away.  Therefore
$F_{\lambda,0}(x+\beta)-F_{\lambda,0}(x)$ reduces to a linear
polynomial whose coefficient is
\begin{equation}\label{eq:beta-linear}
 \lambda\bigl(\beta^{1/P}-\beta^{q_0}\bigr)
 +\lambda^{1/q_0}
 \bigl(\beta^P-\beta^{1/q_0}\bigr).
\end{equation}
This must vanish for every $\lambda\in\F_q$.  If the two coefficients
in \eqref{eq:beta-linear} were not both zero, the nonzero polynomial
$AX+BX^P\in\overline{\F}_q[X]$ would vanish on all $q$ elements of
$\F_q$.  This is impossible because $P<q$ for $s\ge1$.  Therefore
\[
 \beta^{1/P}=\beta^{q_0}.
\]
Raising to the $P$-th power and using $Pq_0=q$ gives
$\beta^q=\beta$.  Hence $\beta\in\F_q$, proving $E=\F_q$.
The kernel of the action $A_1\to\Aut(X/N)$ is
$A_1\cap\Gal(\overline{\F}_q(X)/\overline{\F}_q(x))=N$.  Therefore
\[
 |A_1|=|N|\,|E|=q^2q=q^3=|U|.
\]
Since $U\subseteq A_1$, it follows that $A_1=U$.
\end{proof}

\begin{theorem}\label{thm:Aut}
For $p>3$,
\[
 \Aut_{\overline{\F}_q}(\cR_{p,s})
 =\Aut_{\F_q}(\cR_{p,s})
 =\Gamma
 \cong U\rtimes\F_q^\times.
\]
In particular,
\[
 |\Aut_{\overline{\F}_q}(\cR_{p,s})|=q^3(q-1).
\]
\end{theorem}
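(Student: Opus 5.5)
We already know $G=G_{P_\infty}$ by \eqref{eq:G-fixes-infty} and $A_1=G_{P_\infty}^{(1)}=U$ by Lemma~\ref{lem:E=Fq}. The structure theorem for the stabilizer of a point then gives $G_{P_\infty}=A_1\rtimes C$, where $C$ is cyclic of order prime to $p$. So it suffices to show $C\cong\F_q^\times$, and specifically that $C$ is realized by the maps $\psi_{a,0,0,0}$. This will give $G=\Gamma$. Since every element of $\Gamma$ is defined over $\F_q$, the equality with $\Aut_{\F_q}(\cR_{p,s})$ follows, and the order count $q^3(q-1)$ comes from \eqref{eq:Gamma-size}.

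\textbf{Step 1: lower bound.} The subgroup $\{\psi_{a,0,0,0}:a\in\F_q^\times\}\le\Gamma$ fixes $P_\infty$ and is cyclic of order $q-1$. Hence $|C|\ge q-1$.

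\textbf{Step 2: $C$ acts on $x$ by scaling.} Since $N=D(U)=D(A_1)$ is characteristic in $A_1$, and $A_1$ is normal in $G$, the group $N$ is normal in $G$. Therefore $G$ acts on $X/N\cong\mathbb P^1_x$ and fixes the point below $P_\infty$. The kernel of this action is contained in $\Gal(X/\mathbb P^1_x)=N$, so it is a $p$-group. Consequently $C$ embeds into the stabilizer of $\infty$ in $\operatorname{PGL}_2(\overline{\F}_q)$. A cyclic $p'$-subgroup of this affine group is conjugate to a group of homotheties. Because $C$ normalizes $A_1/N=\{x\mapsto x+\beta:\beta\in\F_q\}$, we may choose the coordinate $x$ so that a generator $\gamma$ of $C$ acts as $x\mapsto ax$, and $a$ must preserve $\F_q$. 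A priori this only gives $a\in\overline{\F}_q^\times$ with $a\F_q=\F_q$, that is, $a\in\F_q^\times$. (A conjugation by a translation $x\mapsto x+b$ may be needed; it can be absorbed using $U$.)

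\textbf{Step 3: upper bound.} The element $\gamma\psi_{a,0,0,0}^{-1}$ fixes $x$, so it lies in $N\cdot\langle\text{kernel}\rangle$, which is the Galois group of $X/\mathbb P^1_x$, equal to $N$ and hence a $p$-group. Thus the map $C\to\F_q^\times$, $\gamma\mapsto a$, is injective, because an element of $p'$-order in $N$ is trivial. Combined with Step 1, this gives $C\cong\F_q^\times$ and $G=U\rtimes\{\psi_{a,0,0,0}\}=\Gamma$.

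\textbf{Main obstacle.} The delicate point is Step 2: justifying that the action of $C$ on $X/N$ can be normalized to $x\mapsto ax$ using only the existing coordinates, while keeping track of the conjugation needed to move its fixed finite point to $0$. To handle this, I would first note that any $p'$-element fixing $\infty$ acts as $x\mapsto ax+b$ with $a\neq1$, so it has a finite fixed point $-b/(a-1)$. Next, the normalizing condition gives $a\F_q=\F_q$, and since $1\in\F_q$ this forces $a\in\F_q^\times$. Then $b$ is recovered via the conjugation relation with the translations, which must place $b$ in $\F_q$. Finally, composing with an element of $U$ that translates by the fixed point reduces to $b=0$.
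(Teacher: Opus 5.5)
Your overall strategy works: since $A_1=U$ and $G=G_{P_\infty}=U\rtimes C$, it suffices to show $|C|\le q-1$, and the lower bound follows at once from $\Gamma\le G$. The step you single out as the main obstacle, however, is justified incorrectly. If $\gamma$ induces $\bar\gamma\colon x\mapsto ax+b$ on $X/N$, then $\bar\gamma\, t_\beta\,\bar\gamma^{-1}=t_{a\beta}$ for every translation $t_\beta$. So the normalizing condition detects only $a$ and says nothing about $b$, or about the fixed point $b/(1-a)$. Your claim that ``the conjugation relation with the translations'' places $b$ in $\F_q$ is therefore unsupported. So is the assertion in Step 3 that $\gamma\psi_{a,0,0,0}^{-1}$ fixes $x$: in the original coordinate this element induces $x\mapsto x+b$, not the identity. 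If you want $b\in\F_q$, the correct argument is the paper's. The translation $t_b$ lies in $G/N$, and its full preimage is an extension of $N$ by a group of order $p$, hence a $p$-group. It is therefore contained in the unique Sylow $p$-subgroup $U$, so $t_b\in U/N$ and $b\in\F_q$.

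You do not actually need $b$ at all. The map $C\to\overline{\F}_q^\times$, $\gamma\mapsto a$ (linear part of the induced affine map), is a homomorphism. Its image lies in $\F_q^\times$ by your normalizer computation. It is injective whatever $b$ is: if $a=1$ then $\bar\gamma$ is a translation, hence has order $1$ or $p$. Since $\gamma$ has order prime to $p$, this forces $\bar\gamma=1$, i.e.\ $\gamma\in N\cap C=1$. Thus $|C|\le q-1$ and $|G|=q^3|C|\le q^3(q-1)=|\Gamma|$, and $\Gamma\le G$ gives $G=\Gamma$. Draw this conclusion from the orders rather than claiming $C=\{\psi_{a,0,0,0}\}$, which holds only up to conjugation by $U$.

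With this repair, your proof is a slightly leaner variant of the paper's. The paper bounds all of $G/N$ inside $\{x\mapsto ax+b: a\in\F_q^\times,\ b\in\F_q\}$, which requires the Sylow argument to control the translation part. You split off $U$ first via $G_{P_\infty}=G_{P_\infty}^{(1)}\rtimes C$, and then only the $p'$-complement must be bounded; there the translation part plays no role.
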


\begin{proof}
We have shown that $G=G_{P_\infty}$ and that its first ramification
group is
\[
 G_{P_\infty}^{(1)}=A_1=U.
\]
The first ramification group is the unique Sylow $p$-subgroup of the
point stabilizer; hence, since $G=G_{P_\infty}$, the subgroup $U$ is
normal in $G$.  Therefore its derived subgroup
$N=D(U)$, which is characteristic in $U$, is also normal in $G$.
Consequently $G/N$ acts faithfully on
$X/N\cong\mathbb P^1_x$ and fixes infinity.  Every induced map has the
form
\[
 x\longmapsto ax+b.
\]
It normalizes the translation subgroup
$U/N\cong(\F_q,+)$, so $a\F_q=\F_q$ and hence
$a\in\F_q^\times$.  The corresponding scaling already lies in
$\Gamma$.  After composing by its inverse, the remaining image in
$G/N$ is the translation $t_b:x\mapsto x+b$.  If $b\ne0$, then
$\langle t_b\rangle$ has order $p$.  Its full preimage in $G$ is an
extension of the $p$-group $N$ by a cyclic group of order $p$, hence
is itself a $p$-group.  Since $U$ is the unique Sylow $p$-subgroup of
$G$, that preimage is contained in $U$.  Thus $t_b\in U/N$, which
forces $b\in\F_q$; the case $b=0$ is immediate.  Consequently
\[
 |G/N|\le q(q-1),
\]
and therefore $|G|\le q^3(q-1)$.  Equality is attained by $\Gamma$.
Thus $G=\Gamma$, and every element is defined over $\F_q$.
\end{proof}

\begin{corollary}\label{cor:aut-growth}
Let $p>3$, and write $g:=g(\cR_{p,s})$.  Then
\begin{equation}\label{eq:aut-linear-lower}
 \frac{|\Aut(\cR_{p,s})|}{g}>q q_0,
 \qquad
 \frac{|U|}{g}>q_0.
\end{equation}
In particular,
\[
 |\Aut(\cR_{p,s})|>84(g-1).
\]
For fixed $p$ and $s\to\infty$,
\begin{align}
 |\Aut(\cR_{p,s})|
 &\sim 2^{8/5}p^{-4/5}\,g^{8/5},\label{eq:aut-asymp}\\
 |U|
 &\sim 2^{6/5}p^{-3/5}\,g^{6/5}.\label{eq:U-asymp}
\end{align}
\end{corollary}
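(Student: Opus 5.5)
The plan is to compute everything from the genus formula \eqref{eq:genus-R} together with Theorem~\ref{thm:Aut}, which gives $|\Aut(\cR_{p,s})|=q^3(q-1)$ and $|U|=q^3$.

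The second inequality in \eqref{eq:aut-linear-lower} is exactly \eqref{eq:U-over-g} of Lemma~\ref{lem:big}. For the first inequality I would not simply multiply by $q-1$, since that only yields $(q-1)q_0$. Instead I would compare the two sides directly:
\[
 |\Aut|/g=\frac{2q^3}{P+q(P+1)} .
\]
So the claim $|\Aut|/g>qq_0$ is equivalent to $2q^2>q_0\bigl(P+q(P+1)\bigr)$. Substituting $q=pq_0^2$ and $P=pq_0$, this becomes
\[
 2p^2q_0^4>pq_0^2+p^2q_0^4+pq_0^3 ,
\]
that is, $p^2q_0^4>pq_0^3+pq_0^2$. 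This holds since $pq_0^2>q_0+1$. The bound $|\Aut|>84(g-1)$ then follows because $qq_0\ge p^3\ge125>84$.

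For the asymptotics, fix $p$ and let $s\to\infty$, writing $q_0=p^s$. The leading term of the genus is
\[
 g=\tfrac{q-1}{2}\bigl(P+qP+q\bigr)\sim\tfrac12 q^2P=\tfrac12 p^3q_0^5 .
\]
Hence $q_0\sim(2g/p^3)^{1/5}$ and $q=pq_0^2\sim p\,(2g)^{2/5}p^{-6/5}=2^{2/5}p^{-1/5}g^{2/5}$. It follows that
\[
 |U|=q^3\sim 2^{6/5}p^{-3/5}g^{6/5},\qquad |\Aut|\sim q^4\sim 2^{8/5}p^{-4/5}g^{8/5}.
\]
To justify the relative errors, note that $g=\tfrac12 q^2P\,(1+O(1/P))(1+O(1/q))$, so $q^4/(\text{main term})\to1$.

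None of these steps is a real obstacle. The only point needing care is to use the sharper direct comparison for the first inequality rather than deducing it from the $|U|/g$ bound.
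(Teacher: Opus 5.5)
Your proposal is correct and follows essentially the same route as the paper. You use the exact ratio $|\Aut|/g=2q^3/\bigl(P+q(P+1)\bigr)$ and reduce the first inequality to $pq_0^2>q_0+1$; you use $qq_0>84$ for the Hurwitz-type comparison; and you eliminate $q_0$ from $g\sim\tfrac12p^3q_0^5$ to get the asymptotics, which is precisely the paper's argument (the paper uses the sharper $qq_0\ge 625$, but your $\ge125$ suffices).
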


\begin{proof}
The second inequality in \eqref{eq:aut-linear-lower} is
Lemma~\ref{lem:big}.  Since
\[
 \frac{|\Aut(\cR_{p,s})|}{g}
 =\frac{2q^3}{P+q(P+1)},
\]
the first follows from
\[
 2q^3-q q_0\bigl[P+q(P+1)\bigr]
 =p^2q_0^4(pq_0^2-q_0-1)>0.
\]
As $q q_0\ge625$, the comparison with the characteristic-zero
Hurwitz quantity $84(g-1)$ follows.

Finally, with $p$ fixed and $q_0\to\infty$,
\[
 g\sim\frac{p^3}{2}q_0^5,\qquad
 |\Aut(\cR_{p,s})|\sim p^4q_0^8,\qquad
 |U|=p^3q_0^6.
\]
Eliminating $q_0$ gives \eqref{eq:aut-asymp} and
\eqref{eq:U-asymp}.
\end{proof}

\begin{remark}
For $p=3$, the same subgroup $\Gamma$ is the stabilizer of
$P_\infty$ in the full Ree group.  The latter has order
$q^3(q-1)(q^3+1)$; see Hansen--Pedersen \cite{HP}.  Thus the extra
factor $q^3+1$ in the automorphism group occurs together with the
supersingularity phenomenon only in characteristic $3$.  It also
changes the asymptotic scale dramatically: for the classical Ree
family the full group has order $\Theta(g^{14/5})$, whereas
Corollary~\ref{cor:aut-growth} gives $\Theta(g^{8/5})$ for every fixed
odd $p>3$.
\end{remark}

\section{An application of Voloch's construction}\label{sec:Voloch}

We finish with an application of Voloch's construction to unramified
abelian covers over $\F_{q^2}$ in which all $\F_{q^2}$-rational points of
$\cR_{p,s}$ split completely.  The following elementary observation is the
key.

\begin{proposition}\label{prop:Fq2=same}
For every odd prime $p$,
\[
 \cR_{p,s}(\F_{q^2})=\cR_{p,s}(\F_q).
\]
In particular,
\[
 \#\cR_{p,s}(\F_{q^2})=q^3+1.
\]
\end{proposition}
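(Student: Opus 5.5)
We have to show that every affine point of $\cR_{p,s}$ with coordinates in $\F_{q^2}$ already has $x\in\F_q$. The plan is to handle $y$ and $z$ separately. For each equation we take the trace from $\F_{q^2}$ down to $\F_q$.

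Let $(x,y,z)\in\F_{q^2}^3$ be an affine point. We use the fact recalled before \eqref{eq:Z-Mn}: for $a\in\F_{q^2}$, the equation $Y^q-Y=a$ has a solution in $\F_{q^2}$ exactly when $\Tr_{\F_{q^2}/\F_q}(a)=0$. So both
\[
 \Tr_{\F_{q^2}/\F_q}\bigl(x^{q_0}(x^q-x)\bigr)=0
 \qquad\text{and}\qquad
 \Tr_{\F_{q^2}/\F_q}\bigl(x^{2q_0}(x^q-x)\bigr)=0
\]
must hold. Put $d:=x^q-x$, so that $d^q=-d$ and $x^q=x+d$. As in the $n=2$ case of Proposition~\ref{prop:three-counts}, the first trace is
\[
 d\bigl(x^{q_0}-(x+d)^{q_0}\bigr)=-d^{q_0+1}.
\]
This vanishes only when $d=0$, that is, when $x\in\F_q$. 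The second trace equals $-d^{q_0+1}(2x+d)^{q_0}$ and is not needed. The $y$-equation alone already forces $x\in\F_q$, and this is the main observation of the proof.

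Once $x\in\F_q$, both right-hand sides in \eqref{eq:family} vanish. Hence $y^q=y$ and $z^q=z$, so $y,z\in\F_q$, and the point is one of the $q^3$ affine $\F_q$-points. The point $P_\infty$ is the unique point over the pole of $x$ (Corollary~\ref{cor:genus-points}). It is therefore fixed by Frobenius and rational over every extension.

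Two small points need checking. First, $\cR_{p,s}$ has no other points at infinity or singular points to worry about; this follows from the uniqueness of $P_\infty$ in Corollary~\ref{cor:genus-points} together with the smoothness of the affine model, which holds because the equations have the form $y^q-y=\ldots$ with partial derivatives $-1$. Second, the identity $(x+d)^{q_0}=x^{q_0}+d^{q_0}$ is just the Frobenius property. The count $q^3+1$ then follows from \eqref{eq:R-Fq}. There is no real obstacle: the whole argument is the one-line trace computation.
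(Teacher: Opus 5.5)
Your proposal is correct and is essentially the paper's proof: you take the $\F_{q^2}/\F_q$-trace of the $y$-equation, compute it as $-d^{q_0+1}$ with $d=x^q-x$, conclude $x\in\F_q$ and hence $y,z\in\F_q$, and then add $P_\infty$. Your remarks on the smoothness of the affine model and on the unused $z$-trace are harmless additions that the paper leaves implicit.
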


\begin{proof}
Let $(x,y,z)$ be an affine $\F_{q^2}$-rational point and set
$d:=x^q-x$.  Solvability of the first equation over $\F_{q^2}$ implies
\[
 0=\Tr_{\F_{q^2}/\F_q}(x^{q_0}d).
\]
Since $d^q=-d$,
\[
 \Tr_{\F_{q^2}/\F_q}(x^{q_0}d)
 =d\bigl(x^{q_0}-(x+d)^{q_0}\bigr)
 =-d^{q_0+1}.
\]
Thus $d=0$ and $x\in\F_q$.  Both right-hand sides in
\eqref{eq:family} are then zero, so $y,z\in\F_q$.  The point at
infinity is already rational.
\end{proof}

Let $J:=\Jac(\cR_{p,s})$ and write $g:=g(\cR_{p,s})$.  Embed
$\cR_{p,s}$ in $J$ using $P_\infty$ as the origin, and let $\pi_q$
denote the $q$-Frobenius endomorphism of $J$.  Proposition~\ref{prop:Fq2=same}
gives
\[
 \cR_{p,s}(\F_{q^2})=\cR_{p,s}(\F_q)
 \subset J(\F_q)\subset J(\F_{q^2}).
\]
The specialization of Voloch's construction \cite{Voloch} to the
subgroup $J(\F_q)$ is represented by the isogeny
\[
 \phi:=1+\pi_q:J\longrightarrow J.
\]
Indeed, over $\F_{q^2}$ one has
$1-\pi_q^2=(1+\pi_q)(1-\pi_q)$ and
$\ker(1-\pi_q)=J(\F_q)$.

\begin{theorem}\label{thm:Voloch-cover}
There exists a connected unramified abelian cover
\[
 \widetilde{\cR}_{p,s}\longrightarrow\cR_{p,s}
\]
defined over $\F_q$ (and hence over $\F_{q^2}$) of degree
\begin{equation}\label{eq:nVoloch}
 n:=[J(\F_{q^2}):J(\F_q)]
 =L_{\cR}(-1)
 =L_{\cY}(-1)L_{\cZ}(-1)^q.
\end{equation}
Every $\F_{q^2}$-rational point of $\cR_{p,s}$ splits completely.
Consequently
\begin{align}
 g(\widetilde{\cR}_{p,s})&=n(g-1)+1,\label{eq:cover-genus}\\
 \#\widetilde{\cR}_{p,s}(\F_{q^2})&\ge n(q^3+1).\label{eq:cover-points}
\end{align}
Moreover $n>1$.
\end{theorem}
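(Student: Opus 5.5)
We construct the cover by pulling back the isogeny $\phi=1+\pi_q$ along the Abel--Jacobi embedding $\iota:\cR_{p,s}\to J$, $Q\mapsto[Q-P_\infty]$. Since $\phi$ commutes with $\pi_q$, it is defined over $\F_q$. It is separable, because its differential is $1+d\pi_q=1$. Its kernel is the finite étale group scheme $\ker(1+\pi_q)$. Set $\widetilde{\cR}_{p,s}:=\cR_{p,s}\times_{J,\phi}J$. The projection to $\cR_{p,s}$ is then an unramified abelian cover over $\F_q$, with structure group $\ker\phi$. Connectedness is the standard point: $\iota^*:\widehat J\to\Jac(\cR_{p,s})$ is an isomorphism, or equivalently $\iota$ induces a surjection on geometric étale fundamental groups onto $\pi_1(J)$. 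Hence the pullback of the connected étale cover $\phi$ stays connected. This is exactly the argument in Voloch \cite{Voloch}, and we can cite it there. The degree is $\#\ker\phi(\overline{\F}_q)=\deg(1+\pi_q)$. Writing the characteristic polynomial of $\pi_q$ on $J$ as $T^{2g}L_{\cR}(1/T)=\prod_j(T-\alpha_j)$, we get
\[
 \deg(1+\pi_q)=\prod_j(1+\alpha_j)=L_{\cR}(-1),
\]
using $\prod_j(-1-\alpha_j)=\prod_j(1+\alpha_j)$ since there are $2g$ factors. The identity $[J(\F_{q^2}):J(\F_q)]=L_{\cR}(-1)$ then follows from $\#J(\F_{q^n})=\prod_j(1-\alpha_j^n)$, since $1-\alpha_j^2=(1-\alpha_j)(1+\alpha_j)$. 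The last equality in \eqref{eq:nVoloch} is Theorem~\ref{thm:decomp}, evaluated at $T=-1$.

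Next we show that the $\F_{q^2}$-points split completely. By Proposition~\ref{prop:Fq2=same}, every $Q\in\cR_{p,s}(\F_{q^2})$ is $\F_q$-rational, so $\iota(Q)\in J(\F_q)$. For such $Q$ we have $\iota(Q)=\pi_q R+(1-\pi_q)R'$, and we want all preimages under $\phi$ to be $\F_{q^2}$-rational. Take $R$ with $\phi(R)=\iota(Q)$. Then $(1-\pi_q^2)R=(1-\pi_q)\iota(Q)=0$, so $R\in J(\F_{q^2})$. Any two preimages differ by an element of $\ker\phi$, and $\ker\phi\subset\ker(1-\pi_q^2)=J(\F_{q^2})$. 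So all $n$ points of the fibre are rational over $\F_{q^2}$, and the fibre has $n$ distinct points because $\phi$ is étale. This gives \eqref{eq:cover-points}. Formula \eqref{eq:cover-genus} is Riemann--Hurwitz for an unramified cover of degree $n$.

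It remains to show $n>1$, and this is the step requiring an estimate. We have $n=\prod_j|1+\alpha_j|$ with $|\alpha_j|=\sqrt q$, so every factor satisfies $|1+\alpha_j|\ge\sqrt q-1\ge\sqrt{243}-1>1$. Hence $n\ge(\sqrt q-1)^{2g}>1$. Alternatively, $n=1$ would force $J(\F_{q^2})=J(\F_q)$, and the Weil bounds on $\#J(\F_{q^2})$ and $\#J(\F_q)$ already exclude this. The one point that needs care is connectedness, and we rely on Voloch's argument via the surjectivity of $\iota_*$ on fundamental groups, that is, the autoduality of the Jacobian. Everything else is formal.
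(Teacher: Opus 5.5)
Your proposal is correct and follows essentially the same route as the paper: you pull back the separable isogeny $1+\pi_q$ along the Abel--Jacobi embedding, cite Voloch for connectedness, compute $\deg(1+\pi_q)=\prod_i(1+\alpha_i)=L_{\cR}(-1)=\#J(\F_{q^2})/\#J(\F_q)$, and show that every point of the fibre over an $\F_q$-rational point is killed by $1-\pi_q^2$. There are two cosmetic slips, neither of which affects the argument: the stray line $\iota(Q)=\pi_qR+(1-\pi_q)R'$ should be deleted, and $q\ge 243$ is false (the smallest case is $q=27$), but $\sqrt q-1>1$ already holds for $q\ge 27$, so $n\ge(\sqrt q-1)^{2g}>1$ still follows.
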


\begin{proof}
Consider the separable isogeny
\[
 \phi:=1+\pi_q:J\longrightarrow J.
\]
It is separable because $d\pi_q=0$, so $d\phi=\mathrm{id}$.  Let
$\widetilde{\cR}_{p,s}$ be the pullback of the Abel--Jacobi embedding
$\cR_{p,s}\hookrightarrow J$ along $\phi$.  This is precisely the
Voloch construction for the subgroup $J(\F_q)$ over $\F_{q^2}$.
The pullback is the connected finite \emph{\'etale} abelian cover
appearing in Voloch's construction; see \cite[pp.~756--757]{Voloch},
where the pullback under the corresponding isogeny is shown to have
degree equal to the index of the chosen subgroup.  Hence the degree of
the present cover is $\deg\phi$.

For any prime $\ell\ne p$,
\[
 \deg(1+\pi_q)
 =\det(1+\pi_q\mid T_\ell J)
 =\prod_{i=1}^{2g}(1+\alpha_i)
 =L_{\cR}(-1).
\]
On the other hand,
\[
 \frac{\#J(\F_{q^2})}{\#J(\F_q)}
 =\frac{\prod_i(1-\alpha_i^2)}{\prod_i(1-\alpha_i)}
 =\prod_i(1+\alpha_i)
 =L_{\cR}(-1),
\]
which gives the first two equalities in \eqref{eq:nVoloch}; the last
one follows from \eqref{eq:L-decomp}.

It remains to check complete splitting.  Let
$Q\in\cR_{p,s}(\F_{q^2})=\cR_{p,s}(\F_q)$ and let $z\in J(\overline{\F}_q)$
satisfy
\[
 z+\pi_q(z)=Q.
\]
Applying $\pi_q$ and using $\pi_q(Q)=Q$ gives
\[
 \pi_q(z)+\pi_q^2(z)=Q.
\]
Subtracting the two equalities yields $\pi_q^2(z)=z$.  Thus every
point of the fibre $\phi^{-1}(Q)$ is $\F_{q^2}$-rational, so $Q$
splits completely.  Hurwitz now gives \eqref{eq:cover-genus}, and
complete splitting gives \eqref{eq:cover-points}.

Finally, $|\alpha_i|=\sqrt q$ for every complex embedding, so
\[
 n=L_{\cR}(-1)
 =\prod_i|1+\alpha_i|
 \ge(\sqrt q-1)^{2g}>1.
\]
\end{proof}

\begin{remark}
Voloch used the same construction for Hermitian and Suzuki curves and
noted analogous examples coming from Ree curves \cite{Voloch}.
Borges--Coutinho \cite[Section~6]{BC} applied it to the generalized
Suzuki curve, which is the quotient $\cY_{p,s}$ identified in
Remark~\ref{rem:Y=GS}.  The point here is therefore the decomposition
for the two-equation curve: the degree separates into the already known
$\cY$-contribution and the complementary $\cZ$-contribution in
\eqref{eq:nVoloch}.
\end{remark}

\section{Final remarks}

The results above separate three phenomena that coincide for the
classical Ree curve but need not coincide in odd characteristic.
First, the ray-class structure forces $p$-rank zero throughout the
family.  Second, the quotient $\cY_{p,s}$ is the already known
generalized Suzuki curve of Borges--Coutinho and is geometrically
supersingular.  Third, as soon as $p>3$, the complementary
$\cZ$-factor acquires a slope strictly below $1/2$.  Thus
$\cR_{p,s}$ provides a family of Castle curves with $p$-rank zero and
a large automorphism group, but with a genuinely non-supersingular
Newton polygon.

This profile is complementary to the elementary abelian covers
studied by Borges--Fukasawa \cite{BF}, where positive $p$-rank and
large Sylow $p$-subgroups are central.  Here the $p$-rank vanishes,
yet
\[
 \frac{|U|}{g}\to\infty,\qquad
 |\Aut(\cR_{p,s})|=\Theta(g^{8/5}),
\]
and the full automorphism group is still determined explicitly.

The decomposition \eqref{eq:J-decomp} also identifies the remaining
arithmetic problem sharply.  The first slope of the new factor lies
in the nontrivial interval
\[
 \frac1{P+2}\le\lambda_{\min}(\cZ_{p,s})\le\frac13.
\]
The point-count method of Section~\ref{sec:NP} admits an exact
formulation.  From \eqref{eq:Z-Mn}, for every $n\ge2$,
\begin{equation}\label{eq:Sn-Mn-final}
 S_n=q\bigl(q^{n-1}-M_n\bigr),
\end{equation}
so
\[
 v_q(S_n)=1\quad\Longleftrightarrow\quad p\nmid M_n.
\]
Because Proposition~\ref{prop:prank} gives $p$-rank zero, every
nonconstant coefficient of $L_{\cZ}(T)$ has positive $q$-adic
valuation.  The following single-index criterion sharpens the form of
the point-count argument used in Section~\ref{sec:NP}.

\begin{proposition}\label{prop:single-index}
Let $2\le N\le 2g(\cZ_{p,s})$.  If $p\nmid N$ and $p\nmid M_N$, then
\[
 v_q(c_N)=1,
 \qquad\text{and hence}\qquad
 \lambda_{\min}(\cZ_{p,s})\le\frac1N.
\]
\end{proposition}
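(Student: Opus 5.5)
Newton's identities relate the $c_n$ and the power sums $S_n$:
\[
 Nc_N=-\Bigl(S_N+\sum_{j=1}^{N-1}c_jS_{N-j}\Bigr).
\]
The plan is to apply this identity at $N$ and show that the right-hand side has $q$-adic valuation exactly $1$. Since $p\nmid N$, dividing by $N$ changes no valuation, so this gives $v_q(c_N)=1$.

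The first step uses the paper's valuation bookkeeping. By Proposition~\ref{prop:prank}, $\cZ_{p,s}$ has $p$-rank zero, so every $c_j$ with $j\ge1$ has $v_q(c_j)>0$. The same holds for every $S_n$: either read it off Newton's identities inductively, or note that all slopes are positive, so $v_q(\alpha_i)>0$ for every $i$ in a suitable $p$-adic field and hence $v_q(S_n)>0$. Next, \eqref{eq:Sn-Mn-final} gives $v_q(S_n)\ge1$ for every $n\ge2$. It also holds for $n=1$, since $S_1=-A$ with $A=q(q-1)$. The hypothesis $p\nmid M_N$ then gives $v_q(S_N)=1$ exactly.

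The second step bounds the cross terms $c_jS_{N-j}$ for $1\le j\le N-1$. Here $v_q(S_{N-j})\ge1$ and $v_q(c_j)>0$, so each cross term has valuation strictly greater than $1$. The ultrametric inequality then gives $v_q\bigl(S_N+\sum_j c_jS_{N-j}\bigr)=1$, hence $v_q(c_N)=1$.

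The slope bound then follows directly. The Newton polygon passes through $(0,0)$ and lies on or below $(N,1)$, so by convexity the first slope is at most $1/N$. The hypothesis $N\le 2g(\cZ_{p,s})$ guarantees that $c_N$ is actually a coefficient of $L_{\cZ}$.

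The main obstacle is the positivity claim $v_q(c_j)>0$ for all $j\ge1$, and in particular strict positivity in the cross terms; everything else is formal. For this step I would use the standard fact that $p$-rank zero means no unit-root Frobenius eigenvalues, so every $\alpha_i$ has positive valuation and all elementary symmetric functions $c_j$ do as well. These valuations may be non-integral rationals, but they are strictly positive, which is all the strict inequality requires.
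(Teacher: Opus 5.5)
Your proposal is correct and follows the paper's own proof essentially step for step. You apply Newton's identity at index $N$, and you use $v_q(S_i)\ge1$ (from \eqref{eq:Sn-Mn-final} and $S_1=-q(q-1)$) together with $v_q(c_j)>0$ (from $p$-rank zero) to isolate $S_N$ as the unique summand of valuation exactly $1$. You then divide by the unit $N$ and conclude by convexity of the Newton polygon.
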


\begin{proof}
Newton's identity in degree $N$ is
\begin{equation}\label{eq:Newton-final}
 N c_N+\sum_{i=1}^{N} c_{N-i}S_i=0,
 \qquad c_0:=1.
\end{equation}
For every $i\ge2$, equation~\eqref{eq:Sn-Mn-final} gives
$v_q(S_i)\ge1$, while $S_1=-q(q-1)$ has valuation exactly $1$.
If $i<N$, then $N-i\ge1$, and the $p$-rank-zero hypothesis gives
$v_q(c_{N-i})>0$.  Therefore
\[
 v_q(c_{N-i}S_i)>1\qquad (i<N).
\]
On the other hand, since $N\ge2$, the assumption $p\nmid M_N$ and
\eqref{eq:Sn-Mn-final} imply $v_q(S_N)=1$.  Thus $S_N$ is the unique
summand on the right-hand side of \eqref{eq:Newton-final} having
valuation $1$.  Consequently $v_q(Nc_N)=1$.  Since $p\nmid N$, one has
$v_q(N)=0$, and hence $v_q(c_N)=1$.  Therefore the Newton polygon has
ordinate at most $1$ at abscissa $N$.  The sum of its first $N$ slopes
is at most $1$, and hence its first slope is at most $1/N$.
\end{proof}

Thus a single nondivisibility condition $p\nmid M_N$ gives an upper
bound whenever $p\nmid N$; no information about the preceding values
$M_2,\ldots,M_{N-1}$ is needed.  On the other hand, the Hodge lower
bound gives
\[
 v_q(S_n)\ge n\lambda_{\min}\ge\frac{n}{P+2}.
\]
In particular $p\mid M_n$ for every $n>P+2$.  Since
$p\nmid(P+1)$ and $p\nmid(P+2)$, Proposition~\ref{prop:single-index}
yields the conditional implications
\[
 p\nmid M_{P+1}
 \quad\Longrightarrow\quad
 \frac1{P+2}\le\lambda_{\min}(\cZ_{p,s})\le\frac1{P+1},
\]
and
\[
 p\nmid M_{P+2}
 \quad\Longrightarrow\quad
 \lambda_{\min}(\cZ_{p,s})=\frac1{P+2}.
\]
It remains open whether these nondivisibility conditions occur in
general.  In particular, the second one would force the first slope
to attain the Hodge lower bound.

A second direction is to replace the second equation in
\eqref{eq:family} by
\[
 z^q-z=x^{iq_0}(x^q-x),\qquad 2\le i\le p-1.
\]
These are the higher members of the same ray-class pattern.  Their
conductor jumps suggest a filtration parallel to
Proposition~\ref{prop:ramification}, while the character-orbit
argument of Section~\ref{sec:decomp} suggests corresponding Jacobian
decompositions.  A systematic determination of their Newton polygons
would place the characteristic-three exceptional phenomenon in a
broader ray-class framework.

\section*{Acknowledgements}

The second author acknowledges partial financial support from CNPq,
grant no.~302774/2025-4, and FAPESP, grant no.~2024/00923-6.

\section*{Declaration on the use of AI-assisted tools}
During the preparation of this manuscript, the authors used
AI-assisted tools solely for English-language editing and stylistic
polishing.  The authors reviewed the resulting text, independently
verified the mathematical statements, proofs, computations and
references, and take full responsibility for the content of the
manuscript.

\end{document}